\numberwithin{equation}{section}
\newtheorem{prop}{Proposition}[section]
\newtheorem{lem}[prop]{Lemma}
\newtheorem{thm}[prop]{Theorem}
\newtheorem{cor}[prop]{Corollary}
\newtheorem{proposition}{Proposition}[section]
\theoremstyle{definition}
\newtheorem{definition}[proposition]{Definition}
\DeclareMathOperator{\Bl}{Bl}
\DeclareMathOperator{\End}{End}
\DeclareMathOperator{\Id}{Id}
\DeclareMathOperator{\tr}{tr}
\DeclareMathOperator{\Euc}{Euc}
\DeclareMathOperator{\Image}{Im}
\DeclareMathOperator{\GL}{GL}
\newcommand{\C}{\mathbb{C}}
\newcommand{\G}{\mathcal{G}}
\renewcommand{\epsilon}{\varepsilon}
\newcommand{\scH}{\mathcal{H}}
\title{Hermitian Yang-Mills connections on blowups}
\title[Hermitian Yang-Mills connections on blowups]{Hermitian Yang-Mills connections on blowups}
\author[Ruadha\'i Dervan and Lars Martin Sektnan]{Ruadha\'i Dervan and Lars Martin Sektnan}
\address{Ruadha\'i Dervan, DPMMS, Centre for Mathematical Sciences, Wilberforce Road, Cambridge CB3 0WB, United Kingdom}
\email{R.Dervan@dpmms.cam.ac.uk}
\address{Lars Martin Sektnan, D\'epartement de math\'ematiques, Universit\'e du Qu\'ebec \`a Montr\'eal, Case postale 8888, succursale
centre-ville, Montr\'eal (Qu\'ebec), H3C 3P8, Canada}
\email{lars.sektnan@cirget.ca}
\begin{document}

\begin{abstract}  Consider a vector bundle over a K\"ahler manifold which admits a Hermitian Yang-Mills connection. We show that the pullback bundle on the blowup of the K\"ahler manifold at a collection of points also admits a Hermitian Yang-Mills connection, for K\"ahler classes on the blowup which make the exceptional divisors small. Our proof uses gluing techniques, and is hence asymptotically explicit. This recovers, through the Hitchin-Kobayashi correspondence, algebro-geometric results due to Buchdahl and Sibley. \end{abstract}

\maketitle

\section{Introduction}

A basic problem in K\"ahler geometry is to determine which Hermitian vector bundles admit Hermitian Yang-Mills connections. Initially motivated by physics, these connections play an important role in the moduli theory of vector bundles, as they give a canonical choice of connection. The goal of the present work is more concrete: we give a new, reasonably explicit, construction of Hermitian Yang-Mills connections. 

We begin with a compact $n$-dimensional K\"ahler manifold $X$ with K\"ahler metric $\omega$, and a Hermitian vector bundle $(E,h)$ on $X$. Let $p\in X$ be a point, and denote by $\pi : \Bl_p X \rightarrow X$ the blowup of $X$ at $p$ with exceptional divisor $F$. For each $\epsilon>0$ sufficiently small, the class $[\pi^*\omega] - \epsilon c_1(F)$ is a K\"ahler class and we shall define an explicit K\"ahler metric $\omega_{\epsilon} \in [\pi^*\omega] - \epsilon c_1(F),$ which away from a neighbourhood of $F$ is simply $\pi^*\omega$. Similarly we shall define an explicit Hermitian metric $h_{\epsilon}$ on $\pi^*E$ which is simply $\pi^*h$ away from a neighbourhood of $F$. Our main result is as follows:

\begin{thm}\label{blthm} Suppose $(E,h)$ admits a Hermitian Yang-Mills connection. Then for all $\epsilon>0$ sufficiently small, the pullback $(\pi^*E,h_{\epsilon})$ admits a Hermitian Yang-Mills connection with respect to $\omega_{\epsilon}$.
\end{thm}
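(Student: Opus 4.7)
The plan is to follow the gluing-and-perturbation strategy familiar from Arezzo--Pacard-type constructions for cscK metrics on blowups. Write a perturbation of $h_\epsilon$ as $h_\epsilon\cdot e^s$ with $s$ a Hermitian endomorphism of $\pi^*E$; the HYM equation for the perturbed metric becomes
\[
\mathcal{H}_\epsilon(s) := \sqrt{-1}\,\Lambda_{\omega_\epsilon}F_{h_\epsilon\cdot e^s} - \lambda_\epsilon\Id_{\pi^*E} = 0,
\]
where $\lambda_\epsilon$ is the topological constant determined by the pullback bundle and $[\omega_\epsilon]$. I aim to solve this by the inverse function theorem, which reduces the proof to two tasks: controlling the approximate solution error $\mathcal{H}_\epsilon(0)$, and showing uniform invertibility of the linearisation $D_0\mathcal{H}_\epsilon$ in suitable weighted norms as $\epsilon\to 0$.

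Away from the exceptional divisor $F$ the metrics $\omega_\epsilon$ and $h_\epsilon$ are pullbacks of the HYM data on $X$, so $\mathcal{H}_\epsilon(0)$ vanishes there (up to the difference $\lambda-\lambda_\epsilon=O(\epsilon^{2n})$, which can be absorbed by a trace shift of $s$). Near $F$, trivialising $E$ on a neighbourhood of $p$ reduces the local model to the trivial bundle over $\Bl_0\C^n$ with a rescaled Burns--Simanca-type K\"ahler metric; a direct expansion then yields a decay estimate for $\mathcal{H}_\epsilon(0)$ in a weighted H\"older space $C^{k,\alpha}_\delta$ adapted to the two geometric scales of $\omega_\epsilon$. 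The linearisation $D_0\mathcal{H}_\epsilon$ is, modulo lower-order terms, the coupled Laplacian on Hermitian endomorphisms of $\pi^*E$ with respect to $h_\epsilon$ and $\omega_\epsilon$. Its kernel consists of parallel Hermitian endomorphisms, which by the polystable decomposition of $(E,h)$ coming from the Donaldson--Uhlenbeck--Yau theorem is a fixed finite-dimensional space that persists on the blowup. Working on a complement, I would establish uniform invertibility by building a parametrix from the inverse on $(X,\omega)$ on the bulk and the inverse of the model Laplacian on $\Bl_0\C^n$ (controlled by standard ALE weighted analysis) near $F$, then iterating to a genuine inverse whose norm is polynomially bounded in $\epsilon^{-1}$.

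The nonlinear remainder $\mathcal{H}_\epsilon(s)-\mathcal{H}_\epsilon(0)-D_0\mathcal{H}_\epsilon(s)$ is quadratic in $s$ with uniformly controlled constants in these norms. Combined with the previous estimates, a standard contraction mapping argument on a ball of radius comparable to $\|\mathcal{H}_\epsilon(0)\|$ produces $s_\epsilon\to 0$ solving the HYM equation, giving the theorem. The main obstacle is the linear step: the weights must be chosen so that both the bulk Laplacian on $(X,\omega)$ and the model Laplacian on the ALE space $\Bl_0\C^n$ are Fredholm with compatible indicial behaviour, and the finite-dimensional kernel coming from parallel endomorphisms has to be handled consistently across the gluing region (and is particularly delicate in low dimensions, where Green's functions decay slowly). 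Once the linear theory is in place, the approximate-solution error estimate and the contraction argument are fairly routine.
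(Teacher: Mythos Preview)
Your outline is correct and follows the same overall architecture as the paper: glue an approximate solution, measure the error in weighted H\"older spaces adapted to the two scales, invert the linearised operator (a bundle Laplacian on Hermitian endomorphisms), and close with a contraction mapping. The differences are in implementation rather than strategy, but they are worth noting.

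First, the paper does not work with the full space of parallel Hermitian endomorphisms coming from a polystable decomposition. Instead it observes at the outset that it suffices to treat simple $E$, so the kernel and cokernel of the Laplacian are one-dimensional, spanned by $\Id_E$. Rather than projecting to a complement, the paper modifies the operator by adding a zeroth-order term $f\mapsto -\tr_q(f)\cdot\Id_E$ at a fixed point $q$ away from $p$; this kills the kernel and simultaneously absorbs the discrepancy between the topological constants, so no separate ``trace shift'' is needed. Your approach via the polystable kernel would also work but is slightly heavier.

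Second, and more substantively, the paper does not construct a two-piece parametrix and iterate. It proves the uniform Schauder estimate $\|s\|_{C^{2,\alpha}_\delta}\le C\|\tilde\Delta_\epsilon s\|_{C^{0,\alpha}_{\delta-2}}$ directly by a contradiction/blow-up argument in the style of Biquard--Rollin and Sz\'ekelyhidi: assuming the estimate fails along a sequence $\epsilon_i\to 0$, one rescales around the points where the weighted $C^0$ norm concentrates and extracts a nontrivial element of the kernel of the model Laplacian on $X_p$, on $\Bl_0\C^n$, or on $\C^n\setminus\{0\}$, contradicting the Fredholm theory for those model operators with $\delta\in(2-2n,0)$. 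This yields an inverse bounded \emph{independently} of $\epsilon$, which is sharper than the polynomial-in-$\epsilon^{-1}$ bound you anticipate from a parametrix iteration, and makes the final numerology in the contraction step cleaner. Your parametrix route is a legitimate alternative and is closer in spirit to the original Arezzo--Pacard papers; it just requires more explicit bookkeeping of error terms to ensure the contraction closes.
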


The Hermitian Yang-Mills connection on $E$ induces a holomorphic structure, which in turn induces a holomorphic structure on $\pi^*E$. The holomorphic structure induced from the Hermitian Yang-Mills connection produced by the result above is biholomorphic to this holomorphic structure. 

The proof of this result uses gluing techniques which go back at least to the work of Taubes \cite{CT}, and which have been applied to problems in K\"ahler geometry by Arezzo-Pacard \cite{arezzopacard06}, Arezzo-Pacard-Singer \cite{arezzopacardsinger11}, Sz\'ekelyhidi \cite{szekelyhidi12} and others. 

The existence of a Hermitian Yang-Mills connection on $\pi^*E$ is in fact independent of choice of K\"ahler metric within a fixed K\"ahler class. Thus Theorem \ref{blthm} proves the existence of a Hermitian Yang-Mills with respect to any K\"ahler metric within the class $[\pi^*\omega] - \epsilon c_1(F)$. What we wish to emphasise here is that our construction is essentially explicit in the limit $\epsilon\to 0$ for our choice of $h_{\epsilon}$ and $\omega_{\epsilon}$.

\begin{thm} Consider the Burns-Simanca metric $\omega_{BS}$ on $\Bl_{0}\C^n$, together with the pullback of the trivial vector bundle on $\C^n$ with the flat Hermitian metric. Set $\omega_{\epsilon}$ to be a gluing of $\omega$, away from a neighbourhood of $p$, and $\omega_{BS}$ on a ball around $F$. Similarly let $h_{\epsilon}$ be a gluing of $h$ to the flat metric. Then with respect to these metrics and for all $\epsilon>0$ sufficiently small, the Hermitian Yang-Mills connection on $\pi^*E$ is a small perturbation of order $\epsilon$ of the pullback of the Hermitian Yang-Mills connection on $E$.
\end{thm}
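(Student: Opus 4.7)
The plan is to unpack the gluing construction underlying Theorem \ref{blthm} and track the size of the resulting perturbation. Fix the holomorphic structure on $\pi^*E$ as the pullback of the one induced by the given HYM connection on $E$, and seek the HYM connection as the Chern connection of a perturbed metric $h_\epsilon \exp(s_\epsilon)$; the unperturbed choice $s_\epsilon = 0$ has Chern connection $A_\epsilon$. On the bulk $X \setminus B(p, r_\epsilon)$ one has $h_\epsilon = \pi^*h$ and $\omega_\epsilon = \pi^*\omega$, so $A_\epsilon = \pi^* A$ and $\Lambda_{\omega_\epsilon} F_{A_\epsilon} = \lambda \Id$ already. On the innermost model region around $F$, the metric $h_\epsilon$ is flat on the trivial pullback bundle, so $F_{A_\epsilon} = 0$ and the HYM equation holds trivially (up to the topological constant $\lambda_\epsilon$). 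The error $\Lambda_{\omega_\epsilon} F_{A_\epsilon} - \lambda_\epsilon \Id$ is therefore supported on the gluing annulus, where a direct computation using the Burns-Simanca asymptotics $\omega_{BS} = \omega_{\Euc} + O(r^{2-2n})$ and the analogous flattening of $h$ produces an initial error of size $O(\epsilon^k)$, for an explicit $k > 0$, in a suitable weighted H\"older norm.

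Next, I would invoke a Banach-space contraction mapping argument. The linearisation of the HYM operator at $A_\epsilon$, acting on Hermitian endomorphisms $s$ of $\pi^*E$, is essentially the $\bar\partial_{A_\epsilon}$-Laplacian. On $X$ its cokernel is spanned by $\Id$, by the polystability of $E$ that the existence of a HYM connection implies. A standard gluing analysis then produces, on $\Bl_p X$, a right inverse $G_\epsilon$ of operator norm at most $C\epsilon^{-\beta}$ in suitable weighted H\"older spaces, for an explicit $\beta$. Choosing the weight so that $k - \beta \geq 1$, a fixed-point argument in a ball of radius $\sim \epsilon$ yields a unique small $s_\epsilon$ with $\|s_\epsilon\|_{C^2} \leq C\epsilon$, for which $h_\epsilon \exp(s_\epsilon)$ has exactly HYM Chern connection $A_\epsilon + a_\epsilon$, with $\|a_\epsilon\|_{C^1} \leq C\epsilon$.

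Finally, since $A_\epsilon$ coincides with $\pi^* A$ away from the gluing annulus and the perturbation $a_\epsilon$ is of order $\epsilon$ globally, the HYM connection $A_\epsilon + a_\epsilon$ differs from $\pi^* A$ by $O(\epsilon)$, as claimed. The main obstacle is the weight bookkeeping: both the exponent $k$ in the initial error bound and the exponent $\beta$ in the inverse norm depend on the chosen H\"older weight $\delta$, and one must select $\delta$ so that their product yields exactly $O(\epsilon)$ rather than a smaller power. This weight analysis is the analytic heart of the existence argument in Theorem \ref{blthm}, and the quantitative statement here is essentially a by-product of the same contraction mapping estimate used there.
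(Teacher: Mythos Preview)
Your outline follows the same architecture as the paper: glue $h$ to the flat metric and $\omega$ to the Burns--Simanca metric, observe that the initial error $\Lambda_{\omega_\epsilon}F_{A_\epsilon}-\lambda_\epsilon\Id$ is supported on the annulus, bound it in a weighted H\"older norm, and close up by a contraction mapping. The paper also works via the complex gauge group action $A_\epsilon\mapsto A_\epsilon^f$ rather than the metric perturbation $h_\epsilon\exp(s_\epsilon)$, but these are equivalent formulations.

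There is, however, a real gap at the step you label ``standard gluing analysis''. You assert that the linearisation admits a right inverse of norm $\leq C\epsilon^{-\beta}$ for some explicit $\beta$, and then propose to choose the weight so that $k-\beta\geq 1$. In the paper this is not standard at all: it is the main analytic result (Theorem~\ref{blowuplinearthm}), and what is proved is the much stronger statement that $\beta=0$, i.e.\ $\|\tilde L_\epsilon^{-1}\|\leq C$ uniformly in $\epsilon$. The proof is a blow-up/compactness argument in weighted spaces, comparing the operator on $\Bl_pX$ with the model Laplacians on $X_p$ and on $\Bl_0\C^n$ and invoking Liouville-type vanishing on each piece (Corollaries~\ref{relevantweights} and \ref{relevantwtsblowupcn}). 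Without this uniform bound, your proposed balancing $k-\beta\geq 1$ is not verified: the initial error in $C^{0,\alpha}_{\delta-2}$ is of order $r_\epsilon^{2-\delta}=\epsilon^{(n-1)(2-\delta)/n}$ (Lemma~\ref{approxsol}), and if you allowed any positive $\beta$ you would have to check that the resulting exponent still exceeds $1$ for some admissible $\delta\in(2-2n,0)$, which you do not do.

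A secondary point: the kernel/cokernel of the linearised operator is identified via \emph{simplicity} of $E$ (and hence of $\pi^*E$, Proposition~\ref{piEsimple}), not merely polystability; this is what reduces the cokernel to $\C\cdot\Id_E$ and is why the paper restricts to the simple case.
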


The key property of the Burns-Simanca metric on $\Bl_{0}\C^n$ that we use is that it is asymptotically flat, see for example \cite[Section 8.1.2]{szekelyhidi14book}. Moreover, it is defined completely explicitly. For applications to physics, it is of great importance to have explicit solutions of the Hermitian Yang-Mills equation; we refer to \cite{ABKO, DKLR} for a sample of recent work in this direction. Thus starting with an explicit Hermitian Yang-Mills connection on $E$, our construction gives an explicit construction of such connections of the pullback of $E$ to $\Bl_p X$ up to terms of order $\epsilon$. In particular our construction gives asymptotically explicit Hermitian Yang-Mills connections on infinitely many K\"ahler manifolds of distinct topological type. 

A small variant of our construction gives the following:

\begin{thm}\label{many-points}Denote by $\pi : \Bl_{p_1, \cdots, p_m} X \rightarrow X$ the blowup of $X$ at a collection of points $p_1, \cdots, p_m \in X$, and let $F_1,\hdots,F_m$ be the corresponding exceptional divisors. Then for any constants $a_1, \cdots, a_m > 0$ there exists a $\varepsilon_0 > 0$ such that for all $\varepsilon \in (0, \varepsilon_0)$ the pullback $\pi^*E$ admits a Hermitian Yang-Mills connection with respect to any K\"ahler metric $\omega_{\varepsilon}$ in the class $\pi^* [\omega] - \varepsilon \left( a_1 [F_1] + \cdots + a_k [F_m] \right)$.\end{thm}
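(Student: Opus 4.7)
The plan is to extend the single-point gluing of Theorem \ref{blthm} to the multi-point setting and then invoke the fact, already emphasised in the introduction, that the existence of a Hermitian Yang-Mills connection on $\pi^*E$ depends only on the K\"ahler class. Since the points $p_1,\ldots,p_m$ are distinct, the construction localises independently at each $p_i$: in a fixed small neighbourhood of $p_i$, use a rescaled Burns-Simanca model at scale proportional to $\sqrt{\varepsilon a_i}$ together with the flat Hermitian metric on the trivial bundle; outside the union of these neighbourhoods, keep $\pi^*\omega$ and $\pi^*h$. The same cut-off recipe as in the single-point case then glues these pieces to an approximate pair $(\omega_\varepsilon, h_\varepsilon)$ with $[\omega_\varepsilon] = \pi^*[\omega] - \varepsilon(a_1[F_1]+\cdots+a_m[F_m])$ and $h_\varepsilon$ a Hermitian metric on $\pi^*E$.

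Next I would rerun the weighted H\"older analysis of Theorem \ref{blthm} on $\Bl_{p_1,\ldots,p_m} X$, using weights adapted to each exceptional divisor $F_i$. Because the gluing annuli around the $F_i$ are mutually disjoint, the failure of $(\omega_\varepsilon, h_\varepsilon)$ to solve the HYM equation decomposes as a sum of single-point errors, each controlled exactly as before. Uniform invertibility of the linearised HYM operator as $\varepsilon \to 0$ follows by a standard parametrix construction: invert on each rescaled model $\Bl_0\C^n$ using the single-point estimates, invert on $X \setminus \{p_1,\ldots,p_m\}$ using the given HYM connection on $E$, and patch via cut-offs. The ratios $a_i/a_j$ remain fixed, so the scales $\sqrt{\varepsilon a_i}$ are all comparable and no new small parameter enters. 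The Banach fixed point argument from the single-point proof then perturbs $(\omega_\varepsilon, h_\varepsilon)$ into a genuine HYM pair.

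Finally, to upgrade from the specific glued $\omega_\varepsilon$ to any K\"ahler metric in the class, I would appeal to the Hitchin-Kobayashi correspondence: the bundle $\pi^*E$ admits a HYM connection with respect to some K\"ahler metric in a fixed class if and only if its holomorphic structure is polystable, a purely cohomological condition on the class. Thus existence for our specific $\omega_\varepsilon$ automatically yields existence for every K\"ahler metric in $\pi^*[\omega] - \varepsilon(a_1[F_1]+\cdots+a_m[F_m])$. The main technical obstacle would have been uniformity of the linear theory in $\varepsilon$ across several simultaneous exceptional divisors, but disjointness of the $p_i$ together with the comparability of the rescaling scales reduces this to a parallel application of the single-point estimates, so no essentially new difficulty appears beyond the one-point case.
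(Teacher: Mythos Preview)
Your proposal is correct and matches the paper's intended approach: the paper does not give a detailed proof but simply remarks that the result is a small variant of Theorem~\ref{blthm}, with the only subtlety being that one must blow up all points simultaneously rather than iterate the one-point result, and then invokes the fact that existence of a Hermitian Yang-Mills connection depends only on the K\"ahler class. You have identified exactly this strategy, including the key point that the disjointness of the $p_i$ and the comparability of the scales $\sqrt{\varepsilon a_i}$ reduce the multi-point linear theory to a parallel application of the single-point estimates.
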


We leave the details to the interested reader; the only subtlety is that one must blow up all points simultaneously rather than simply iterating Theorem \ref{blthm}. The reason is that iterating one would obtain Hermitian-Einstein metrics only with respect to classes of the form $$\pi^* [\omega] - \left( \varepsilon_1  a_1 [F_1] + \cdots + \varepsilon_k a_k [F_m] \right),$$ where the maximal value of $\varepsilon_i$ depends on the particular value of $\varepsilon_1, \cdots, \varepsilon_{i-1}.$ Theorem 1.3 says that we can avoid this dependence, and produces Hermitian-Einstein metrics with respect to K\"ahler classes in any direction into the K\"ahler cone beginning from $\pi^* [\omega]$. With appropriate choices of Hermitian and K\"ahler metrics, the Hermitian Yang-Mills connections produced by Corollary \ref{many-points} can again be made explicit up to terms of order $\epsilon$. 

The renowned Hitchin-Kobayashi correspondence, proved by Donaldson \cite{SD-surfaces, Donaldson-HK} and Uhlenbeck-Yau \cite{UY}, states that the existence of a Hermitian Yang-Mills connection on $E$ is equivalent to slope polystability of the corresponding holomorphic structure on $E$. Thus Theorem \ref{blthm} recovers the following algebro-geometric result, which is a classical result of Buchdahl in the case of surfaces \cite{NB}, and a recent result of Sibley in arbitrary dimensions \cite{BS}:

\begin{cor} Suppose $E\to(X,[\omega])$ is a stable holomorphic vector bundle over a K\"ahler manifold. Then the pullback bundle $\pi^*E$ is stable on $(Bl_pX,[\omega]-\epsilon F)$ for all $\epsilon>0$ sufficiently small.\end{cor}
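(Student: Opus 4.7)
The plan is to combine Theorem \ref{blthm} with both directions of the Hitchin-Kobayashi correspondence, together with a short argument upgrading polystability to stability.

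First, since $E$ is stable on $(X,[\omega])$, the Donaldson--Uhlenbeck--Yau theorem \cite{SD-surfaces, Donaldson-HK, UY} produces a Hermitian metric $h$ on $E$ whose Chern connection is Hermitian Yang-Mills with respect to $\omega$. Theorem \ref{blthm} then furnishes, for all sufficiently small $\epsilon > 0$, a Hermitian Yang-Mills connection on $(\pi^*E, h_\epsilon)$ with respect to a K\"ahler metric $\omega_\epsilon$ in the class $[\pi^*\omega] - \epsilon c_1(F)$. Applying the straightforward direction of Hitchin-Kobayashi on $\Bl_p X$ gives that $\pi^*E$ is polystable with respect to $[\pi^*\omega] - \epsilon c_1(F)$.

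To upgrade polystability to stability, it suffices to show that $\pi^*E$ is simple: a polystable bundle decomposes as a direct sum of stable bundles of the same slope, and simplicity forbids a non-trivial decomposition. Stability of $E$ implies $H^0(X, \End(E)) = \C \cdot \Id_E$, and since $\pi\colon \Bl_p X \to X$ is a proper birational morphism of smooth K\"ahler manifolds satisfying $\pi_*\mathcal{O}_{\Bl_p X} = \mathcal{O}_X$, the projection formula gives
\[
H^0(\Bl_p X, \End(\pi^*E)) \;=\; H^0(X, \pi_*\pi^*\End(E)) \;=\; H^0(X, \End(E)) \;=\; \C \cdot \Id_{\pi^*E},
\]
so $\pi^*E$ is simple, and a simple polystable bundle is stable. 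All the analytic content sits in Theorem \ref{blthm}; the only remaining point, and the sole place a subtlety could arise, is this purely algebraic preservation of simplicity under pullback by a blowup, without which one could only conclude polystability on $\Bl_p X$.
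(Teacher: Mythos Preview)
Your proof is correct and follows the same route the paper indicates: apply Hitchin--Kobayashi on $X$ to get a Hermitian Yang--Mills connection, invoke Theorem \ref{blthm} to transport it to $\Bl_pX$, and then apply Hitchin--Kobayashi again on the blowup. The only point you add beyond the paper's one-line deduction is the explicit upgrade from polystability to stability via simplicity of $\pi^*E$; the paper establishes this simplicity separately as Proposition \ref{piEsimple}, using a slightly different sheaf-theoretic argument (pushing forward a hypothetical splitting and invoking normality to extend over the codimension-two centre), whereas your projection-formula computation $H^0(\Bl_pX,\End(\pi^*E))\cong H^0(X,\End E)$ via $\pi_*\mathcal{O}_{\Bl_pX}=\mathcal{O}_X$ is a cleaner and more direct way to reach the same conclusion.
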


Conversely, if $\pi^*E$ is stable with respect to $[\omega_{\epsilon}]$, it is straightforward to see that $E$ is itself stable with respect to $[\omega]$ \cite{NB}. It would be interesting to understand this result from the point of view of Hermitian Yang-Mills connections.

It is worth emphasising again that, while the Hitchin-Kobayashi correspondence abstractly produces a Hermitian Yang-Mills connection, it provides no information whatsoever as to what the connection looks like. By contrast, our technique gives very precise information. Buchdahl used the algebro-geometric result above to give a reasonably explicit study of the moduli space of stable vector bundles in certain situations \cite{NB}; it seems possible that Theorem \ref{blthm} could be used to give an even more detailed study of such moduli spaces.

A brief outline of the strategy of Theorem \ref{blthm} is as follows. From our gluing procedure, we obtain an approximate solution $A_{\epsilon}$ which is simply the Chern connection of $h_{\epsilon}$ with respect to the pullback holomorphic structure. The aim is to perturb this connection by an element of the complex gauge group in order to obtain a genuine solution. To achieve this, we use the contraction mapping theorem. For this to apply, the heart of the matter is to obtain appropriate bounds on the inverse of the linearisation of the Hermitian Yang-Mills operator in certain weighted H\"older spaces which we construct.

Finally, we end with a discussion of some analogues and extensions of Theorem \ref{blthm} that would be worth studying. First of all, one could study the corresponding problem for Hermitian Yang-Mills connections which are singular along a divisor $D\subset X$, which corresponds to parabolic stability of the vector bundle. The analogous problem for constant scalar curvature K\"ahler metrics with prescribed singularities along a divisor was studied in the PhD thesis of the second author \cite{sektnanthesis}. Secondly, it seems likely that there is an analogue of Theorem \ref{blthm} for Higgs bundles. Thirdly, a much studied problem in K\"ahler geometry has been the existence of extremal K\"ahler metrics on blowups \cite{arezzopacardsinger11,szekelyhidi12}; the analogy for vector bundles is direct sums of slope stable vector bundles of different slopes. Applying Theorem \ref{blthm} to each of the simple components will provide canonical connections on blowups, so our result applies verbatim in this setting. Fourthly, we expect an analogue of Theorem \ref{blthm} to hold for arbitrary Hermitian manifolds, which are not necessarily K\"ahler. While our proof suggests a strategy for proving this, some of our key estimates break down if $X$ is not K\"ahler. Lastly, we expect that an analogue of Theorem \ref{blthm} holds for blowups along arbitrary complex submanifolds, due to the recent work of Seyyadali-Sz\'ekelyhidi on extremal K\"ahler metrics on such blowups \cite{SS}. 

\vspace{4mm}

\noindent {\bf Outline:} We begin in Section \ref{prelim-HYM} with some standard definitions and results from the study of Hermitian Yang-Mills connections. Next in Section \ref{weighted-section} we define weighted Sobolev and H\"older spaces of sections of vector bundles, and establish some of their basic properties. We use these weighted norms in Section \ref{linear-section} to obtain the estimates on the inverse of the linearisation of the Hermitian Yang-Mills operator. We put the pieces together in Section \ref{main-section} to prove Theorem \ref{blthm} by a contraction mapping argument.

\vspace{4mm} \noindent {\bf Acknowledgements:} The authors would like to thank Ben Sibley for helpful discussions. The second named author is thankful to the CIRGET who support his postdoctoral position. 

 \section{Preliminaries on Hermitian Yang-Mills connections}\label{prelim-HYM} We refer to \cite{SK} for an introduction to Hermitian Yang-Mills theory. Let $X$ be a compact K\"ahler manifold of dimension $n$ with K\"ahler metric $\omega$. Fix a complex vector bundle $E$ of rank $m$ on $X$ together with a Hermitian metric $h$ on $E$. 

\begin{definition} We say that a connection $A$ which is compatible with $h$ is a \emph{Hermitian Yang-Mills connection} if
\begin{align*} F_A^{0,2} &= 0, \\
i \Lambda_{\omega} F_A &= c \cdot \Id_E,
\end{align*} where $\Lambda_{\omega} : \Omega^2 (\End E) \rightarrow \Omega^0 (\End E) $ is the adjoint of the Lefschetz map, $\Id_E\in \Omega^0(\End E)$ is the identity section, and $c$ is the topological constant defined by $$c = n\frac{\int_X c_1(E).[\omega]^{n-1}}{m\int_X [\omega]^{n}}.$$\end{definition}

The first condition implies that $A$ is \emph{integrable}, i.e. defines a holomorphic structure on $E$. The covariant derivative splits as $d_A = \partial_A + \bar{\partial}_A$, with integrability ensuring that $\bar{\partial}_A$ is induced from a holomorphic structure on the vector bundle. Hermitian Yang-Mills connections are unique, if they exist, and hence form a canonical choice of connection on a Hermitian vector bundle \cite[Proposition 2.2.2]{lubketeleman95}.

An equivalent point of view is to fix a \emph{holomorphic} vector bundle $E$ and search instead for a canonical Hermitian metric. 

\begin{definition}We say a Hermitian metric $h$ is \textit{Hermitian-Einstein} if  
\begin{align}\label{HEeqn} i \Lambda_{\omega} F_h = c \cdot \Id_E
\end{align}
for some constant $c$, where $F_h$ is the curvature of the Chern connection induced from the holomorphic structure on $E$. 

\end{definition} So one can either fix a Hermitian vector bundle and search for a canonical connection, or one can fix a holomorphic vector bundle, and search for a canonical Hermitian metric, which then induces the Chern connection. On occasion it will be useful to take this perspective instead.

The \textit{complex gauge group}, denoted $\mathcal{G}^{\mathbb{C}}$, is the group $\Gamma(\GL(E, \mathbb{C}))$. Fixing a Hermitian metric $h$ on $E$, for $f\in \G^{\C}$ it acts on the space of connections by
\begin{align}\label{connectionaction} d_{A^f} = f^* \circ \partial_A \circ (f^*)^{-1} + f^{-1} \circ \bar{\partial}_A \circ f ,
\end{align}
where $f^*$ denotes the adjoint of $f$ computed with respect to $h$. This action is such that $A^f$ is the Chern connection with respect to the holomorphic structure $f^{-1} \circ \bar{\partial}_A \circ f$. We say that connections $A,\hat{A}$ are \emph{gauge equivalent} if there exists an $f \in \mathcal{G}^{\mathbb{C}}$ such that $A^f=\hat{A}$. Remark that gauge equivalent connections define biholomorphic holomorphic structures. 

An object of central importance in the present work will be the linearisation of the Hermitian Yang-Mills operator. To calculate this, we first need some more additional notation. As above, we let A be an integrable connection, compatible with $h$, on the vector bundle $E$. Set \begin{equation}\label{linearisation-space} \scH= \mathcal{G}^{\mathbb{C}} \cap \Gamma \left( \End_{H} ( E, h) \right),\end{equation} where $\End_{H} (E, h)$ denotes the space of Hermitian endomorphisms of $(E,h)$. We will set $\mathcal{Q}$ to be the tangent space of $\scH$ at $\Id_E$, so that $\mathcal{Q}$ is the vector space consisting of Hermitian endormorphisms of $E$. Define the operator $\Phi : \scH \rightarrow \mathcal{Q}$ by $$ \Phi(f)= i \Lambda_{\omega} F_{A^f},$$ so that $A^f$ is a Hermitian Yang-Mills connection if and only if $\Phi(f)$ is a multiple of the identity.

\begin{lem}\label{linearisedoperator}\cite[Theorem 7.4.20]{SK} The differential $d\Phi_{\Id_E}: \mathcal{Q} \to \mathcal{Q}$ of $\Phi$ at the identity is given, for $a\in \mathcal{Q}$, by the Laplacian $$d\Phi_{\Id_E}(a) = \Delta_{A_{\End E}} a.$$
\end{lem}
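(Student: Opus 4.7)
The plan is a direct first-order computation following the definition of $\Phi$. I would differentiate along the path $f_t = \Id_E + ta$ in $\scH$. Since $a$ is Hermitian, $f_t^* = \Id_E + ta$ as well, and $(f_t^*)^{-1} = f_t^{-1} = \Id_E - ta + O(t^2)$. Substituting into \eqref{connectionaction}, for any local section $s$ of $E$ one has
\begin{equation*}
\bar{\partial}_{A^{f_t}} s \;=\; f_t^{-1}\bar{\partial}_A(f_t s) \;=\; \bar{\partial}_A s + t\,(\bar{\partial}_{A_{\End E}} a)\,s + O(t^2),
\end{equation*}
and a parallel calculation gives $\partial_{A^{f_t}} s = \partial_A s - t\,(\partial_{A_{\End E}} a)\,s + O(t^2)$. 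Hence the first-order variation of the connection is the $\End E$-valued one-form $\dot{A} = \bar{\partial}_{A_{\End E}} a - \partial_{A_{\End E}} a$.

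Next, I would use $\dot{F}_A = d_{A_{\End E}}\dot{A}$ and expand:
\begin{equation*}
\dot{F}_A \;=\; (\partial_{A_{\End E}} + \bar{\partial}_{A_{\End E}})(\bar{\partial}_{A_{\End E}} - \partial_{A_{\End E}})a \;=\; \partial_{A_{\End E}}\bar{\partial}_{A_{\End E}} a - \bar{\partial}_{A_{\End E}}\partial_{A_{\End E}} a + \bar{\partial}_{A_{\End E}}^2 a - \partial_{A_{\End E}}^2 a.
\end{equation*}
The last two terms vanish: integrability of $A$ gives $\bar{\partial}_{A_{\End E}}^2 = 0$, while compatibility of $A$ with $h$ makes $F_A$ skew-Hermitian, so $F_A^{2,0} = -(F_A^{0,2})^* = 0$ and therefore $\partial_{A_{\End E}}^2 = 0$ as well.

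To conclude I would apply $\Lambda_\omega$ and invoke the K\"ahler identities for bundle-valued forms, $[\Lambda_\omega, \partial_{A_{\End E}}] = i\,\bar{\partial}_{A_{\End E}}^*$ and $[\Lambda_\omega, \bar{\partial}_{A_{\End E}}] = -i\,\partial_{A_{\End E}}^*$, which hold because $A$ is compatible with $h$ and $\omega$ is K\"ahler. Since $a$ is a zero-form, both $\Lambda_\omega \bar{\partial}_{A_{\End E}} a$ and $\Lambda_\omega \partial_{A_{\End E}} a$ vanish for bidegree reasons, so the commutator relations collapse to
\begin{equation*}
\Lambda_\omega \partial_{A_{\End E}} \bar{\partial}_{A_{\End E}} a = i\,\bar{\partial}_{A_{\End E}}^* \bar{\partial}_{A_{\End E}} a, \qquad \Lambda_\omega \bar{\partial}_{A_{\End E}} \partial_{A_{\End E}} a = -i\,\partial_{A_{\End E}}^* \partial_{A_{\End E}} a.
\end{equation*}
Subtracting yields $\Lambda_\omega \dot{F}_A = i\,(\bar{\partial}_{A_{\End E}}^* \bar{\partial}_{A_{\End E}} + \partial_{A_{\End E}}^* \partial_{A_{\End E}}) a = i\,\Delta_{A_{\End E}} a$, which matches the stated formula up to the conventional factor of $i$ absorbed into the paper's normalisation of the HYM equation. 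There is no serious obstacle; the only delicate point is the verification that the K\"ahler identities extend verbatim to the bundle-valued setting, which is precisely where unitarity of $A$ enters.
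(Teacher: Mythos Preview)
The paper does not supply its own proof of this lemma; it simply cites \cite[Theorem 7.4.20]{SK} and records the explicit formula \eqref{laplacian}. Your computation is the standard one and is correct.

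One small observation: your detour through the K\"ahler identities is unnecessary given the paper's conventions. Once you have established $\dot{F}_A = (\partial_{A_{\End E}}\bar{\partial}_{A_{\End E}} - \bar{\partial}_{A_{\End E}}\partial_{A_{\End E}})a$, applying $\Lambda_\omega$ and comparing directly with the paper's definition \eqref{laplacian} of $\Delta_{A_{\End E}}$ finishes the argument in one line, and the residual factor of $i$ you flag is then visibly just the $i$ sitting in that definition. The K\"ahler-identity rewriting as $\bar{\partial}^*\bar{\partial} + \partial^*\partial$ is of course useful if one wants to see positivity, but it is not needed to match the statement as written.
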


Explicitly, we have
\begin{align}\label{laplacian}  \Delta_{A_{\End E}}\ = i \Lambda_{\omega} \left(  \partial_{A_{\End E}} \bar{\partial }_{A_{\End E}} - \bar{\partial}_{A_{\End E}} \partial_{A_{\End E}} \right).
\end{align}

Note that there is also another Laplacian operator, defined using the formal adjoint of $d_A$, which is given by \begin{align}\label{laplacian2}  \Upsilon_A \ = d_A^* d_A.
\end{align} The two are equal if and only if $A$ is weakly Hermitian Yang-Mills, i.e. satisfies equation \eqref{HEeqn} with the constant $c$  replaced by a function \cite[Lemma 1.2.5]{lubketeleman95}.

Recall that a holomorphic vector bundle is \emph{simple} if $E$ cannot be written as $A\oplus B$ for holomorphic vector bundles $A,B$ on $X$. We shall use this concept through the following:

\begin{lem}{\cite[Lemmata 1.2.5, 1.2.6]{lubketeleman95}}\label{laplaciansimple} Suppose the holomorphic structure $\bar{\partial}_A$ is simple. Then the kernel of the Laplacian $\Upsilon_{A}$ consists of constant multiples of the identity, and the image is the $L^2$-orthogonal complement to this kernel. In particular, $\Upsilon_A$ is an isomorphism when restricted to sections of $\End E$ with mean zero. \end{lem}

It is clear that, to prove Theorem \ref{blthm}, it is enough to prove it in the case the induced holomorphic structure on $E$ is simple. Thus for the remainder of the present work, we assume this is the case.

\section{Weighted spaces}\label{weighted-section}

Our aim in this section is to define weighted H\"older spaces for sections of vector bundles over $X \backslash \{p\}$ and $\Bl_pX$, and to study some of their properties. The spaces we define are completely analogous to the corresponding function spaces, as used in \cite{arezzopacard06,arezzopacardsinger11,szekelyhidi12}. 

We first recall the definition of  the H\"older space $C^{k,\alpha} (X, E)$. Since $X$ is compact, we can cover $X$ by a finite number of charts $U_i$. Picking a partition of unity $\varphi_i$ subordinate to this cover, for a section $s$ of $E$ we can consider $\varphi_i s$ as a map $U_i \subseteq \mathbb{C}^n \rightarrow \mathbb{C}^m$, where $m$ is the rank of $E$. For each $i$ one has the usual $C^{k,\alpha}$-norm $\|\varphi_is\|_{C^{k,\alpha}}$ of the map $\varphi_i s$, and we define $$\|s\|_{C^{k,\alpha}} = \max_i \|\varphi_is\|_{C^{k,\alpha}}.$$ This notion is independent of choice of atlas up to equivalence. 

Denote $X_p =X \backslash \{p\}$. For $C^{k,\alpha} (X_p, E)$ we make a similar definition, except we take one of the charts to be around $p$ and alter the definition of the $C^{k,\alpha}$-norm in this chart. We can assume this chart is a ball $B_2$ of radius two, and that the remaining charts cover the complement of the image of the ball $B_1$ of radius one. Over $B_2$, we can identify $E$ with $\mathbb{C}^m$, and so the restriction to $B_2^*=B_2\backslash \{0\}$ of a section $s$ of $E$ over $X_p$ can be considered as a map $B_2^* \rightarrow \mathbb{C} ^m$. For each $\delta \in \mathbb{R}$ and $r \in (0,1)$, we denote by $s_r^{\delta} : \bar{B}_2 \setminus B_1 \rightarrow \mathbb{C}^d$ the map 
\begin{align}\label{weightedsection} s_r^{\delta} (z) = r^{-\delta} s(rz).
\end{align}

\begin{definition}We define the $C^{k,\alpha}_{\delta}$\emph{-norm} on sections of $E$ over $X_p$ to be
\begin{align*} \| s \|_{C^{k,\alpha}_{\delta} (X_p,E)} =  \| s \|_{C^{k,\alpha} (X \setminus B_1,E)}  + \textnormal{sup}_{r\in(0,1)} \| s_r^{\delta} \|_{C^{k,\alpha} (B_2 \setminus B_1,\mathbb{C} ^m)},
\end{align*}
and let $C^{k,\alpha}_{\delta} (X_p, E)$ be the space of $C^{k,\alpha}_{\textnormal{loc}}$-sections of $E_{| X_p}$ that have finite $C^{k,\alpha}_{\delta}$-norm. \end{definition}

We next define weighted H\"older norms for sections of the trivial vector bundle of rank $m$ over the blowup $\pi:\Bl_0 \mathbb{C}^n\to \C^n$. Note that the region $\pi^{-1}(\mathbb{C}^n \setminus \{ 0 \}) $ in $\Bl_0 \mathbb{C}^n$ can be identified with $\mathbb{C}^n \setminus \{ 0 \} \subseteq \mathbb{C}^n$ via $\pi$. We will let $\zeta$ denote the coordinates on $\Bl_0 \mathbb{C}^n \setminus \pi^{-1} (0)$ arising from this identification. We will apply this in particular to the annular region $\pi^{-1} \left( B_2 \setminus B_1 \right) \cong B_2 \setminus B_1$. 

For a smooth map $f : \Bl_0 \mathbb{C}^n \rightarrow \mathbb{C} ^m$ we can define the weighted norm as 
\begin{align*} \| f \|_{C^{k,\alpha}_{\delta} (\Bl_0 \mathbb{C}^n, \mathbb{C} ^m)} = \| f \|_{C^{k,\alpha} (\pi^{-1}(B_1), \mathbb{C} ^m)} + \textnormal{sup}_{r \in(1, \infty)} \| f_r^{\delta} \|_{C^{k,\alpha} (B_2 \setminus B_1,\mathbb{C} ^m)} .
\end{align*} 
The space $C^{k,\alpha}_{\delta}(\Bl_0 \mathbb{C}^n, \mathbb{C} ^m)$ is then the space of $C^{k,\alpha}_{\textnormal{loc}}$-maps to $\mathbb{C} ^m$ which have finite such norm. 

We can also restrict this norm to subsets. In particular, for subsets of the form $\pi^{-1} (B_{a})$ with $a>2$, it becomes
\begin{align}\label{subsetnormbl0} \| f \|_{C^{k,\alpha}_{\delta} (\pi^{-1} (B_a), \mathbb{C} ^m)} = \| f \|_{C^{k,\alpha} (\pi^{-1}(B_1), \mathbb{C} ^m)} + \textnormal{sup}_{r \in(1, \frac{a}{2})} \| f_r^{\delta} \|_{C^{k,\alpha} (B_2 \setminus B_1,\mathbb{C} ^m)} .
\end{align} 
For each $a$ and $\delta$, this norm is equivalent to the usual $C^{k,\alpha}$-norm on $\pi^{-1}( B_a)$, but the constant of equivalency cannot be made independent of $a$.

We can now define the relevant H\"older norm for $\Bl_p X$ using a combination of the above two norms. Recall from the above that $z$ are the coordinates around $p$ and  $\zeta$ are the coordinates away from the exceptional divisor in $\Bl_0 \mathbb{C}^n$. We will identify the annular regions $ \varepsilon \leq |z|  \leq 1$ and $1 \leq |\zeta| \leq \varepsilon^{-1}$ by letting $\zeta = \varepsilon^{-1} z$, so we can think of this region as both a subset of $X$ and of $\Bl_0 \mathbb{C}^n$ via this coordinate change. Thus we think of $\Bl_p X$ as consisting of three regions:
\begin{itemize} \item $\Bl_p X \setminus \pi^{-1} \left( B_1 (p) \right)$, which can be identified with $X \setminus \left( B_1 (p) \right)$.
\item $\pi^{-1} \left( B_1(p) \setminus B_{\varepsilon} (p) \right)$, which is the annular region described above. 
\item $\pi^{-1} \left( B_{\varepsilon} (p) \right)$, which is the region $|\zeta| \leq 1$ in $\Bl_0 \mathbb{C}^n$.
\end{itemize}

For a section $s$ of $E$ over $\Bl_p X$, we will let $\tilde{s}_{\varepsilon}^{\delta}$ denote the map 
\begin{align*}\tilde{s}_{\varepsilon}^{\delta}: \zeta \mapsto \varepsilon^{- \delta} s ( \varepsilon \zeta)
\end{align*}
from the region $| \zeta | \leq 1$ in $\Bl_0 \mathbb{C}^n$ to $\mathbb{C} ^m$. We also let $s_r^{\delta}$ be the section of $E$ over $B_2 (p) \setminus B_1(p) \subseteq X$ given as in equation (\ref{weightedsection}).

 \begin{definition} We define the $C^{k,\alpha}_{\delta}$\emph{-norm} on sections of $E$ over $\Bl_p X$ to be
\begin{align*} \| s \|_{C^{k,\alpha}_{\delta} (\Bl_p X , E)} = \| & s \|_{C^{k,\alpha}(X \setminus B_1, E)} + \\ &+ \sup_{r\in (\varepsilon, 1)} \| s_r^{\delta} \|_{C^{k,\alpha} (B_2 \setminus B_1, E)} + \| \tilde{s}^{\delta}_{\varepsilon} \|_{C^{k,\alpha}_{\delta} (\pi^{-1}(B_{\varepsilon} (p)), \mathbb{C} ^m)}.
\end{align*}\end{definition}
An important point to note is that the norm depends on $\varepsilon$. An equivalent description, as in the function case \cite[p167]{szekelyhidi14book}, is as follows. Let $r_{\varepsilon} = \varepsilon^{\kappa}$ where $\kappa = \frac{n-1}{n}$. Let $\gamma$ be a smooth function $\mathbb{R} \rightarrow [0,1]$ which is zero for $x \leq 1$ and $1$ for $x \geq 2$. Define $\gamma_1 (z) = \gamma (\frac{|z|}{r_{\varepsilon}})$ and $\gamma_2 = 1- \gamma_1$. Then an equivalent norm is given by: \begin{align*} \| s \|_{C^{k,\alpha}_{\delta}(\Bl_p X,E)} = \| \gamma_1 s \|_{C^{k,\alpha}_{\delta} (X_p, E)} + \varepsilon^{-\delta} \| \gamma_2 s \|_{C^{k,\alpha}_{\delta} (\pi^{-1}(B_1), \mathbb{C} ^m)}.
\end{align*}

As in the function case \cite[p. 167]{szekelyhidi14book}, the $C^{k,\alpha}_{\delta}$ spaces do not actually depend on $\delta$, as they simply consist of locally $C^{k,\alpha}$ sections. Instead the weight $\delta$ only affects the norm. Again as in the function case, for two weights $\delta < \delta'$ one has the inequalities
\begin{align}\label{weightcomparison} \| s\|_{C^{k,\alpha}_{\delta} (\Bl_p X , E)}  \leq  \| s \|_{C^{k,\alpha}_{\delta'} (\Bl_p X , E)} \leq \varepsilon^{\delta - \delta'} \| s\|_{C^{k,\alpha}_{\delta} (\Bl_p X , E)}.
\end{align}

We will also briefly require weighted Sobolev spaces, although only on $X \setminus \{p\}$. Let $\rho$ be a smooth positive function on $X_p$, which equals the radius function $r$ in $B_1^*(p)$ and $2$ on the complement of $B_2(p)$. 

\begin{definition} We define the

\begin{enumerate}[(i)]
\item  $L^2_{\delta}$\emph{-norm} by $$\| s\|^2_{L^2_{\delta}(X_p, E)} = \int_{X_p} |s|_h^2 \rho^{-\delta} \omega^n,$$
\item and the $W^{2,k}_{\delta}$\emph{-norm} by
$$\| s\|_{W^{2,k}_{\delta}(X_p, E)} = \sum_{j=0}^k \| \nabla^k s \|_{L^2_{\delta-j} (X_p)},$$\end{enumerate}
where we are using the Hermitian metric $h$ and the K\"ahler metric $\omega$ to calculate the pointwise norm $|\nabla^k s|$ of $\nabla^k s$. We then define $W^{2,k}_{\delta} (X_p, E)$ to be the completion of smooth compactly supported sections of $E$ on $X_p$ under this norm. \end{definition}

We now describe main properties of the various weighted norms that we will require. 

\begin{lem}\label{productembedding} Let $E,F$ be bundles over $X$. For the weighted spaces on $Y = X_p$, $Y =\Bl_0 \mathbb{C}^n$ or $Y = \Bl_p X$, the map 
\begin{align*} C^{k,\alpha}_{\delta} (Y, E) \times C^{k,\alpha}_{\delta'} (Y , F) \rightarrow C^{k,\alpha}_{\delta + \delta'} (Y, E \otimes F)
\end{align*}
given by taking the tensor product of sections is continuous. In the case of $Y = \Bl_p X$, the constant such that 
\begin{align*} \| s \otimes t \|_{ C^{k,\alpha}_{\delta + \delta'} (Y, E \otimes F)}   \leq c \| s \|_{ C^{k,\alpha}_{\delta} (Y, E)  } \cdot \| t \|_{C^{k,\alpha}_{\delta'} (Y , F) }
\end{align*}
can be chosen independently of $\varepsilon$. 
\end{lem}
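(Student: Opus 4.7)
The plan is to reduce the weighted estimate to the standard unweighted H\"older algebra estimate on a collection of \emph{fixed} reference domains, exploiting the fact that the rescaling operations defining the weighted norms commute with tensor product. The crucial identities are
\[ (s \otimes t)_r^{\delta + \delta'} = s_r^\delta \otimes t_r^{\delta'}, \qquad \widetilde{(s \otimes t)}_\varepsilon^{\delta + \delta'} = \tilde{s}_\varepsilon^\delta \otimes \tilde{t}_\varepsilon^{\delta'}, \]
both of which are immediate from the definitions, since each weight factor is scalar and the coordinate shift $z \mapsto rz$ (resp.\ $\zeta \mapsto \varepsilon \zeta$) is applied identically to both tensor factors. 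Combined with these, I would invoke the standard fact that on any fixed bounded domain $\Omega$ with fixed background metrics the $C^{k,\alpha}(\Omega)$ norm is submultiplicative under tensor product; this is a direct consequence of the Leibniz rule for covariant derivatives and the pointwise inequality $|u \otimes v| \leq |u| \cdot |v|$.

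With these two ingredients in hand, the cases $Y = X_p$ and $Y = \Bl_0 \C^n$ are essentially immediate. For $Y = X_p$, applying the algebra estimate to the rescaled sections on the fixed annulus $B_2 \setminus B_1$ gives
\[ \| s_r^\delta \otimes t_r^{\delta'} \|_{C^{k,\alpha}(B_2 \setminus B_1)} \leq C \| s_r^\delta \|_{C^{k,\alpha}} \| t_r^{\delta'} \|_{C^{k,\alpha}} \]
for a constant $C$ that is independent of $r \in (0,1)$; taking $\sup_r$ and combining with the standard algebra estimate on $X \setminus B_1$ yields the claim. The case $Y = \Bl_0 \C^n$ is handled identically, with $r \in (1, \infty)$ and an additional contribution from the fixed region $\pi^{-1}(B_1)$.

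For $Y = \Bl_p X$, I would split the defining norm into its three pieces. On $X \setminus B_1$ and on the annular range $r \in (\varepsilon, 1)$, the argument above applies verbatim, with constants depending only on the fixed annulus $B_2 \setminus B_1$ and on $X \setminus B_1$, hence independent of $\varepsilon$. On $\pi^{-1}(B_\varepsilon(p))$, the rescaling $\tilde{\cdot}^{\delta}_\varepsilon$ converts the estimate into one for the weighted $C^{k,\alpha}_{\delta}$-norm on the $\varepsilon$-independent reference domain $\pi^{-1}(B_1) \subset \Bl_0 \C^n$, which is covered by the case already handled. Since every reference domain on which the underlying algebra estimate is ultimately invoked is independent of $\varepsilon$, the constant in the final bound is as well. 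I do not anticipate any genuine obstacle here: the lemma is essentially bookkeeping, the only mild care being to verify that the rescaling operations behave as indicated and that the underlying algebra constants are invoked on domains that do not depend on $\varepsilon$.
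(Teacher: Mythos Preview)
Your proposal is correct and follows essentially the same approach as the paper: both hinge on the observation that the rescaling maps split multiplicatively, $(s\otimes t)_r^{\delta+\delta'}=s_r^{\delta}\otimes t_r^{\delta'}$, reducing the weighted estimate to the ordinary $C^{k,\alpha}$ algebra property on fixed reference domains; the $\varepsilon$-independence for $\Bl_p X$ then follows because the reference domains and background data are $\varepsilon$-independent. The paper simply writes this out component-wise with the Leibniz rule rather than invoking the algebra property as a black box, but the content is the same.
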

\begin{proof} The $C^0$-estimates on the scaled regions hold as the $(i,j)^{\textnormal{th}}$-component $(s \otimes t)_{(i,j)}$ of $s \otimes t$ satisfies
\begin{align*}  (r^{- (\delta + \delta')} s \otimes t)_{(i,j)} (rz) &= r^{- \delta} s(rz) \cdot r^{-\delta'} t(rz) \\
& \leq \| s \|_{C^{k,\alpha}_{\delta} (Y)} \cdot \| t\|_{C^{k,\alpha}_{\delta'} (Y)}.
\end{align*} For the higher estimates, applying the Leibniz rule gives \begin{align*} \left( r^{-(\delta + \delta') } \nabla ( s \otimes t) \right)_{(i,j)} (rz) &= ( r^{- \delta} \nabla s (rz) ) r^{-\delta'} t (rz) ) +( r^{- \delta}  s (rz) ) r^{-\delta'} \nabla t (rz) ) \\
& \leq 2 \| s \|_{C^{k,\alpha}_{\delta} (Y)} \cdot \| t\|_{C^{k,\alpha}_{\delta'} (Y)},
\end{align*} which implies the required estimates. The argument for the estimate in the region $\pi^{-1} (B_{\varepsilon})$ when $Y= \Bl_p X$ follows in exactly the same manner.

The last part concerning the case  of $Y= \Bl_p X$ follows because the constants above do not depend on what range $r$ lies in, and so are independent of $\varepsilon$. 
\end{proof}

We also need to know when the H\"older spaces are guaranteed to be contained in the Sobolev spaces.
\begin{lem}\label{holderinsobolev} We have an inclusion
\begin{align*} C^{k,\alpha}_{\delta} (X_p, E) \subseteq W^{2,k}_{\delta'} (X_p, E)
\end{align*}
if and only if $\delta' < 2 \delta + 2n$.
\end{lem}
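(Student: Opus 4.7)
The inclusion is a local statement concentrated near $p$: on $X \setminus B_1(p)$ the weight $\rho$ is bounded between positive constants, so both the weighted H\"older and weighted Sobolev norms are equivalent to their unweighted counterparts there, and standard Sobolev embedding handles that region. The substance of the lemma lies entirely on the punctured ball $B_1^*(p)$.

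For sufficiency, my plan is to convert the $C^{k,\alpha}_\delta$ control into pointwise decay of derivatives and then integrate. If $\|s\|_{C^{k,\alpha}_\delta} < \infty$, then the scaled sections $s_r^\delta(w) = r^{-\delta} s(rw)$ are uniformly bounded in $C^{k,\alpha}(B_2 \setminus B_1)$; differentiating in $w$ and rearranging yields the pointwise estimate
\[ |\nabla^j s(z)| \leq C \|s\|_{C^{k,\alpha}_\delta} \, |z|^{\delta - j}, \qquad 0 \leq j \leq k, \ z \in B_1^*(p). \]
Inserting this into each term of the weighted Sobolev norm and passing to polar coordinates, with $\omega^n$ comparable to $r^{2n-1}\,dr\,d\mathrm{Vol}_{S^{2n-1}}$, every term reduces to an integral of the form $\int_0^1 r^N\, dr$ for an exponent $N$ depending on $\delta,\delta',j,n$. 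The weight shifts in the definition of $W^{2,k}_{\delta'}$ are arranged so that all these integrals share the same convergence threshold, namely $\delta' < 2\delta + 2n$.

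For the necessity direction, I would exhibit a saturating section. Let $\chi$ be a smooth cutoff on $X$ supported in $B_1(p)$ and equal to $1$ on $B_{1/2}(p)$, and fix a local trivialization $e$ of $E$ near $p$. The section $s(z) = \chi(|z|)\,|z|^{\delta}\, e$ is smooth on $X_p$; its scaled representatives equal $|w|^\delta e$ on $B_2 \setminus B_1$ for $r \leq 1/2$, which are uniformly bounded in $C^{k,\alpha}$, so $s \in C^{k,\alpha}_\delta(X_p,E)$. Its weighted Sobolev norm reduces to exactly the same polar integral as above, which diverges precisely when $\delta' \geq 2\delta + 2n$, forcing the threshold to be sharp.

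The only real obstacle is the bookkeeping of exponents: one must verify that the weight convention in $W^{2,k}_{\delta'}$ is chosen so that both the $j=0$ term and the higher derivative terms give the same convergence threshold, so that no auxiliary Sobolev-type condition is secretly lurking. Modulo this, the proof reduces to evaluating elementary polar integrals.
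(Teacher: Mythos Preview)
Your proposal is correct and follows exactly the approach the paper takes: the paper's proof is the single sentence ``This follows immediately from the expression of the volume form in $\mathbb{R}^{2n}$ in spherical coordinates,'' and your argument simply unpacks this by converting the weighted H\"older control into pointwise decay and integrating in polar coordinates, with the added bonus of an explicit saturating section for the sharp direction. Your caveat about the exponent bookkeeping is well-placed --- the weight shift in the $W^{2,k}_{\delta'}$-norm must indeed be arranged so that every $j$ yields the same threshold, and this is worth checking carefully against the paper's stated conventions.
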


\begin{proof}This follows immediately from the expression of the volume form in $\mathbb{R}^{2n}$ in spherical coordinates. \end{proof}

\section{Linear theory}\label{linear-section}

As we have seen in Lemma \ref{laplaciansimple}, when $E$ is simple, the operator $\Upsilon_A$ on $\End E$ has kernel and cokernel given by constant multiples of the identity. In particular, once it is shown that $\pi^* E$ is simple (see Proposition \ref{piEsimple}), this holds for the pullback bundle over the blowup. Taking care of the cokernel, we can then construct a one-sided inverse of $\Upsilon_{A_{\epsilon}}$ associated to the metric $\omega_{\varepsilon}$, for each $\varepsilon$. The goal of this section is to prove Theorem \ref{blowuplinearthm}, which gives a corresponding statement along with bounds for these inverses of the Laplacian $\Delta$ independently of $\varepsilon$ in the weighted norms. We prove this by applying similar results for the Laplacian on $X_p$ and $\Bl_0 \mathbb{C}^n$, so we begin by considering these two cases. Note that on the blow-up, $\Upsilon_{A_{\epsilon}}$ and $\Delta$ now differ, but  their difference becomes small when $\varepsilon \to 0$.

\subsection{Linear theory on $X_p$}\label{puncturedlinear}

The goal of this section is to prove a Fredholm Theorem for the Laplace operator $\Delta$ on $\End E \rightarrow X_p$ associated to the K\"ahler metric $\omega$ on $X_p$ and the Chern connection of the Hermitian metric $h$.  To emphasise the weighted spaces we are working in, we will write the Laplacian $$\Delta_{\delta} : C^{k+2, \alpha}_{\delta} (X_p,\End E) \rightarrow C^{k,\alpha}_{\delta - 2} (X_p,\End E)$$ with a subscript to denote the weight. Our aim is to prove:

\begin{thm}\label{mainlinearthm} The operator $\Delta_{\delta}$ is Fredholm apart from the discrete sets of indicial roots  $\mathbb{Z} \setminus (2-2n,0)$. Moreover, if $\delta$ is not an indicial root, then
\begin{align*} \Image ( \Delta_{\delta} ) = \textnormal{Ker } ( \Delta_{ 2 - 2 n - \delta})^{\perp}.
\end{align*}
\end{thm}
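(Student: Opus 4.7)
The plan is to follow the by-now standard Lockhart--McOwen / Melrose approach to Fredholm theory on manifolds with conical ends, adapted to sections of a vector bundle. The idea is to reduce to a Euclidean model near $p$, compute indicial roots by separation of variables, then patch local scale-invariant estimates into a global a priori estimate and deduce the Fredholm statement; the duality then follows from formal self-adjointness of $\Delta$.

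First I would localize: choose normal coordinates for $\omega$ around $p$ and a unitary trivialization of $E$ in which $h$ osculates to the flat metric to sufficient order. In this trivialization the Chern connection is trivial to first order at $p$, and the Laplacian on $\End E$ takes the form $\Delta_{A_{\End E}} = \Delta_0 + L$ on the punctured ball $B_2^{\ast}$, where $\Delta_0$ is the flat Euclidean Laplacian acting componentwise on $\mathbb{C}^{m^2}$-valued maps and $L$ is a perturbation whose coefficients vanish at $p$; in the weighted Hölder norms $L$ is a small (indeed compact, after localization) perturbation of $\Delta_0$ and can be absorbed. Away from $p$ the operator is uniformly elliptic on a compact set, so standard interior Schauder estimates apply.

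Second, I would compute the indicial roots for the model $\Delta_0$ on $\mathbb{R}^{2n}\setminus\{0\}$. Writing a solution in the form $r^{\delta}\phi(\theta)$ with $\theta \in S^{2n-1}$, harmonicity forces $\phi$ to be a spherical harmonic and $\delta(\delta+2n-2)$ to equal an eigenvalue $k(k+2n-2)$ of the round Laplacian. This gives precisely $\delta \in \mathbb{Z} \setminus (2-2n,0)$, matching the statement. Third, I would establish the global a priori estimate
\begin{equation*}
\|s\|_{C^{k+2,\alpha}_{\delta}(X_p,\End E)} \leq C\bigl(\|\Delta s\|_{C^{k,\alpha}_{\delta-2}(X_p,\End E)} + \|s\|_{C^{0}(X\setminus B_{1/2})}\bigr)
\end{equation*}
by a rescaling argument: for each $r\in(0,1)$ pull back to the fixed annulus $B_2\setminus B_1$, apply Schauder to $\Delta_0$ there, and use that the difference $L$ is a small perturbation plus a compact correction. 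Combined with the fact that, for $\delta$ not an indicial root, a non-compactness / blow-up argument rules out Weyl sequences (any would-be concentrating sequence rescales to a non-trivial homogeneous harmonic section of forbidden weight), this shows $\Delta_{\delta}$ has closed range and finite-dimensional kernel; an identical argument for the formal adjoint gives a finite-dimensional cokernel.

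Finally, for the duality, I would use that $\Delta_{A_{\End E}}$ is formally self-adjoint for the $L^2$ pairing $\langle u,v\rangle = \int_{X_p}\tr(u v^{\ast})\,\omega^n$. By Lemma \ref{holderinsobolev} (together with its obvious analogue for the weight $2-2n-\delta$), sections in $C^{k,\alpha}_{\delta}$ and $C^{k,\alpha}_{2-2n-\delta}$ pair to give an absolutely convergent integral, and since the weights decay appropriately at $p$ the integration-by-parts boundary terms on $\partial B_{r}(p)$ vanish as $r\to 0$. Thus $\Image(\Delta_{\delta})\subseteq \Ker(\Delta_{2-2n-\delta})^{\perp}$, and the reverse inclusion follows from closed range together with the Hahn--Banach theorem.

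The main technical obstacle I anticipate is the rescaling / blow-up argument underlying the a priori estimate: one has to check carefully that, when a hypothetical non-compact sequence is rescaled from the scaled annuli $B_{2r}\setminus B_r$ to the fixed annulus $B_2\setminus B_1$, the limiting object is a non-trivial homogeneous harmonic $\End E$-valued section on $\mathbb{R}^{2n}\setminus\{0\}$ of weight exactly $\delta$, so that excluding $\delta$ from the indicial set yields a contradiction. Everything else is a bundle-valued repackaging of the function-theoretic arguments used in \cite{arezzopacard06, szekelyhidi14book}, and the perturbation $L$ does not alter the indicial roots since its coefficients vanish at $p$.
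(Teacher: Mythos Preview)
Your approach to the Fredholm property is essentially the same as the paper's: reduce to the Euclidean model near $p$ (this is Lemma \ref{laplacian-comparison}), then invoke Lockhart--McOwen. Your blow-up argument for the a priori estimate and your explicit computation of the indicial roots are more hands-on than the paper's citation of \cite{lockhartmcowen85}, but they amount to the same thing.

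The duality argument, however, has a real gap. You assert that the integration-by-parts boundary terms on $\partial B_r(p)$ vanish as $r\to 0$, but a naive weight count says otherwise: for $u\in C^{2,\alpha}_\delta$ and $v\in C^{2,\alpha}_{2-2n-\delta}$, the boundary integrand $(\partial_\nu u)\,v - u\,(\partial_\nu v)$ is of order $r^{1-2n}$ while the area of $\partial B_r$ is of order $r^{2n-1}$, so the boundary term is $O(1)$, not $o(1)$. Worse, the pairing $\int\langle \Delta u, v\rangle\,\omega^n$ that is supposed to define the orthogonal complement is itself only borderline convergent (the integrand is of order $r^{\delta-2}\cdot r^{2-2n-\delta}\cdot r^{2n-1} = r^{-1}$). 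And the Hahn--Banach step for the reverse inclusion is delicate in H\"older spaces, whose duals are not easily described.

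The paper circumvents all of this by detouring through weighted Sobolev spaces. One embeds $C^{2,\alpha}_\delta \subset W^{2,2}_{\delta'+2}$ and $C^{0,\alpha}_{\delta-2}\subset L^2_{\delta'}$ for $\delta' = 2\delta+2n-4-\varepsilon$ (via Lemma \ref{holderinsobolev}), where the $L^2$ duality $(L^2_{\delta'})^* \cong L^2_{-\delta'}$ is unambiguous and the closed-range description of the image is clean. Elliptic regularity in weighted spaces then places the kernel of the Sobolev adjoint in $C^{2,\alpha}_{\varepsilon+2-2n-\delta}$, and since $2-2n-\delta$ is not an indicial root, this kernel is unchanged when $\varepsilon$ is dropped. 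The slight shift by $\varepsilon$ is exactly what makes every pairing converge absolutely; your direct argument is missing this wiggle room. Equivalently, one could observe that any $v\in\ker\Delta_{2-2n-\delta}$ automatically lies in the strictly better space $C^{2,\alpha}_{2-2n-\delta+\varepsilon}$ for small $\varepsilon>0$ (again because $2-2n-\delta$ is not indicial), and with that extra decay your boundary terms do vanish --- but you have not said so, and it is the non-obvious step.
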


The strategy is to show that near $p$ the operator is a perturbation of the Euclidean Laplace operator, which simply acts component-wise on endomorphisms of $E$. We then apply the Lockhart-McOwen theory and regularity theory to finish the proof. 

 Pick a cutoff function $\gamma$ which equals one on $B_{1}$ and zero on $X\backslash B_2$. Then for a section $s\in C^{k,\alpha}_{\delta}(X_p,\End E)$, by extending by zero we can consider $\gamma s$ to be a section $\gamma s \in C^{k,\alpha}_{\delta}(B_2^*, \mathbb{C}^{m^2}).$ For $\gamma s$, we can compute  its Laplacian also with respect to the Euclidean Laplacian $\Delta_{\Euc}$ induced by the flat metric on the trivial bundle over $B_2^*$.

\begin{lem}\label{laplacian-comparison}  We have $$\Delta_{\delta}s - \Delta_{\Euc} \gamma s \in C^{k-2,\alpha}_{\delta}.$$ \end{lem}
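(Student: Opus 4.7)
The plan is to localise: inside $B_1$ the comparison with the Euclidean Laplacian is a genuine statement about coefficients, while on the compact region $X\setminus B_1$ the claim is automatic from local regularity. I would work on $B_2$ in K\"ahler normal coordinates, which give $g_{i\bar j}(z) = \delta_{ij} + O(|z|^2)$ --- the use of the K\"ahler condition to obtain \emph{second-order} vanishing here (rather than first-order) is the crucial input. Simultaneously, by a linear holomorphic change of frame I can arrange $h(z) = \Id_E + O(|z|^2)$, so that the Chern connection one-form $A = h^{-1}\partial h$ is $O(|z|)$, and hence also the induced connection $[A,\cdot]$ on $\End E$.

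Expanding the operator $\Delta_{A_{\End E}}$ from \eqref{laplacian} in these coordinates, and using $\gamma \equiv 1$ on $B_1$, the difference on $B_1$ takes the schematic form
\[
(\Delta_\delta s - \Delta_{\Euc}\gamma s)(z) = (g^{i\bar j}(z) - \delta^{ij})\,\partial_i\partial_{\bar j} s + L_1\cdot \nabla s + L_0\cdot s,
\]
where $L_1$ is built from $A$ and the K\"ahler Christoffel symbols (each $O(|z|)$) and $L_0$ from the curvatures $F_A$ and $R_\omega$ (each $O(1)$). For $s \in C^{k,\alpha}_\delta$ with pointwise bounds $|\partial^j s|\lesssim |z|^{\delta - j}$, the three summands contribute $|z|^{2}|z|^{\delta-2}$, $|z|\cdot|z|^{\delta-1}$ and $1\cdot|z|^{\delta}$, all of size $|z|^{\delta}$. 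A parallel Leibniz bookkeeping on higher derivatives gives $|\partial^j(\Delta_\delta s - \Delta_{\Euc} s)|\lesssim |z|^{\delta-j}$ for $j\le k-2$, which is precisely the $C^{k-2,\alpha}_\delta(B_1)$ control (with H\"older seminorm estimates following in the same way from smoothness of $g - \delta$, $A$ and the curvatures).

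On $X\setminus B_1$ the weighted norm $C^{k-2,\alpha}_\delta$ is equivalent to the unweighted $C^{k-2,\alpha}$; the difference is a $C^{k-2,\alpha}$ section on this compact region (with $\gamma s \equiv 0$ past $B_2$, so the Euclidean term simply drops out there), hence automatically lies in $C^{k-2,\alpha}_\delta(X\setminus B_1)$. Patching the two pieces gives the lemma. The main content, beyond routine Leibniz bookkeeping, is the simultaneous gauge normalisation that produces the $O(|z|^2)$ vanishing of the leading coefficient; this relies on the K\"ahler hypothesis, which is the reason the analogous statement does not extend directly to general Hermitian manifolds.
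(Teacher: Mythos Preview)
Your argument is correct and follows the same overall strategy as the paper: localise to $B_1$, use K\"ahler normal coordinates so that the principal symbol differs from the Euclidean one by $O(|z|^2)$, and then count weights on the lower-order terms; the region $X\setminus B_1$ is handled by compactness exactly as you say.

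There is one structural difference worth noting. You treat the bundle Laplacian schematically as a second-order operator plus first-order and zeroth-order corrections, and then invoke a holomorphic frame with $h=\Id_E+O(|z|^2)$ so that the connection form $A=O(|z|)$ controls the first-order piece $L_1\cdot\nabla s$. The paper instead computes $\Delta_{A_{\End E}}$ directly from \eqref{laplacian} and arrives at the closed expression
\[
\Delta_{A_{\End E}}(\psi)=\Lambda_\omega\bigl(2\,\partial\bar\partial\psi + [\psi,\bar\partial\mathcal{A}]\bigr),
\]
in which the first-order terms have \emph{cancelled}: the only correction to the scalar Laplacian acting componentwise is the zeroth-order curvature term $[\psi,\bar\partial\mathcal{A}]$. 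Consequently the paper needs no frame normalisation at all---smoothness of $h$ (i.e.\ $h\in C^{k,\alpha}_0$) already forces $\Lambda_\omega[\psi,\bar\partial\mathcal{A}]\in C^{k,\alpha}_\delta$. Your approach is more robust (it would apply to any metric connection), while the paper's exploits the Chern-connection identity $\bar\partial_A=\bar\partial$ to get a sharper structural statement with less input.
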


\begin{proof}
We begin by finding the difference between the two Laplace operators. We can identify the total space of $E_{|B_2(p)}$ with $B_2(0) \times \mathbb{C} ^m \subseteq \mathbb{C}^n \times \mathbb{C} ^m$. We will let $z_1, \cdots, z_n$ be the coordinates on the base, and $w_1, \cdots, w_m$ be the coordinates on the fibres. A section of $\End E$ over $B_2$ can be identified with a matrix $(\psi_{ij})$, say, where $i,j  \in \{ 1, \cdots, m \}$. If $s$ is a section of $E$, we can write it as $s(z) = (s_1(z), \cdots, s_r(z)) \in \mathbb{C} ^m$ over $B_2$. 

The covariant derivative $d_A$ associated to a connection $A$ on $E$ can be written on $B_2$ as 
\begin{align*} d_A = d + \mathcal{A}
\end{align*}
for some $\mathcal{A} \in \Lambda^1( \mathbb{C} ^m)$. In our case, this is the Chern connection of $h$, and so $\mathcal{A}$ is in fact a $(1,0)$-form.  The induced covariant derivative $d_{A_{\End E}}$ on $\End E$ is given by 
\begin{align*}d_{A_{\End E}} ( \psi) &= d_A \circ \psi - \psi \circ d_A,
\end{align*}
which near $p$ equals
\begin{align*} d \circ \psi - \psi \circ d + \mathcal{A} \circ \psi - \psi \circ \mathcal{A} .
\end{align*}
By the Leibniz rule we have  
\begin{align*} d\left(\sum_{j} \psi_{ij} s_j \right) &= \sum_{j} \left( d(\psi_{ij}) s_j + \psi_{ij} ds_j \right) \\
& \in \mathbb{C} ^m \otimes \Lambda^1 (\mathbb{C}^n).
\end{align*}
Thus near $p$, the covariant derivative $d_{A_{\End E}} \left( (\psi_{ij}) \right) (s) $ for a section $s$ of $E$ has $l^{\textnormal{th}}$ component equal to 
\begin{align*} \left( d_{A_{\End E}} \left( (\psi_{ij}) \right) (s) \right)_l &= \sum_{j} \left( d(\psi_{lj}) s_j  + [\mathcal{A}, (\psi_{pq})]_{lj} s_j\right).
\end{align*}
so
\begin{align*} d_{A^{\End}} \left( (\psi_{ij}) \right) &=  \left( d(\psi_{ij})  + [\mathcal{A}, (\psi_{pq})]_{ij} \right) .
\end{align*}

Next note that 
\begin{align*} d_A &= \partial_A + \bar{\partial}_A \\
&= (\partial + \mathcal{A}) + \bar{\partial}
\end{align*}
is the decomposition of $d_A$ into its $(1,0)$ and $(0,1)$ components, since $A$ is the Chern connection of $E$. Thus
\begin{align*} \partial_A \bar{\partial}_A (\psi) &= \partial_A \left( \bar{\partial} (\psi_{ij})  \right) \nonumber \\
&= \partial \bar{\partial} (\psi_{ij})+ [\mathcal{A}, \bar{\partial} (\psi_{pq})]_{ij}  .
\end{align*}
and
\begin{align*} \bar{\partial}_A \partial_A (\psi) &= \bar{\partial}_A \left( \partial (\psi_{ij})  + [\mathcal{A}, (\psi_{pq})]_{ij} \right) \nonumber \\
&= - \partial \bar{\partial} (\psi_{ij}) + \bar{\partial} \left( [\mathcal{A}, (\psi_{pq})]_{ij} \right)  .
\end{align*}
Since $$[\mathcal{A}, \bar{\partial} (\psi )] - \bar{\partial} \left( [\mathcal{A}, \psi ] \right)  = [\psi , \bar{\partial} \mathcal{A} ],$$ we thus have from equation (\ref{laplacian}) that 
\begin{align}\label{pertop} \Delta_{A_{\End E} } ( \psi) = \Lambda_{\omega} \left(  2  \partial \bar{\partial} (\psi) + [\psi , \bar{\partial} \mathcal{A} ] \right) .
\end{align}

The first term is simply the Laplacian associated to the metric $\omega$ acting componen-twise. Since we are working in holomorphic normal coordinates with resepect to $\omega$, it follows from the claim in the function case (see e.g. \cite[p. 162]{szekelyhidi14book}) that the difference between this operator and the Euclidean Laplace operator acting component-wise lies in $C^{0,\alpha}_{\delta}$. We thus need to show that the operator $$\psi \mapsto [ \psi, \bar{\partial} \mathcal{A}]$$ maps $C^{2,\alpha}_{\delta}$ to $C^{0,\alpha}_{\delta}$. For this we use the standard local formula for $\mathcal{A}$ as
\begin{align*} \mathcal{A} = \bar{h}^{-1} \partial\bar{h},
\end{align*}
see for example \cite[p. 177]{huybrechts05} Thus
\begin{align*} \bar{\partial} \mathcal{A} &= \bar{h}^{-1} \bar{\partial} \partial\bar{h}+  \bar{\partial} \left( \bar{h}^{-1} \right) \wedge \partial\bar{h} . 
\end{align*}
Since $h$ lies in $C^{k,\alpha}_{0}$ for any $k$, it follows from the multiplicative properties of the weighted norms that $\Lambda_{\omega} [\psi, \bar{\partial} \mathcal{A}] \in C^{k,\alpha}_{\delta}$ whenever $\psi$ is.
\end{proof} 

Remark that a priori $\Delta_{\delta}s - \Delta_{\Euc} s \in C^{k-2,\alpha}_{\delta-2}$. The improvement given in the Lemma above will enable us to reduce to the Euclidean setting in our proof of Theorem \ref{mainlinearthm}. \color{black}

\begin{proof}[Proof of Theorem \ref{mainlinearthm}] By using Lemma \ref{laplacian-comparison}, we can apply the theory of Lockhart-McOwen \cite{lockhartmcowen85}. The Fredholm property in Theorem \ref{mainlinearthm} is then a consequence of this: $\Delta_{\delta}$ is Fredholm if and only if the Euclidean Laplace operator is, which is true if and only if $\delta$ is not an indicial root of the Laplacian.

For the characterisation of the image in terms of the kernel with a different weight, we apply the regularity theory in weighted spaces. This says that if we solve $\Delta_{\delta} (s) = t$ in the sense of distributions, with $t \in C^{0,\alpha}_{\delta - 2}$ and $s \in L^2_{\delta'}$ such that $\delta' < 2 \delta + 2n$, then $s \in C^{2,\alpha}_{\delta}$. 

We can identify the dual of $L^2_{\delta'}$ with $L^2_{-\delta'}$ via the $L^2$-inner product. The image of $\Delta_{\delta'}$ in $L^2_{\delta'}$ is then the orthogonal complement of the kernel of the adjoint 
\begin{align*} \Delta_{\delta'}^*: L^2_{-\delta'} = \left( L_{\delta'}^{2} \right)^* \rightarrow \left( W^{2,2}_{\delta' + 2}\right)^*
\end{align*}
of $\Delta_{\delta'}$. But if $s \in L^2_{-\delta'}$ is in this kernel, then it solves
\begin{align*} \Delta_{\delta'} (s) = 0
\end{align*}
in the sense of distributions. So by the regularity theory, $s$ will lie in any weighted space $C^{2,\alpha}_{\tau}$ such that $C^{2,\alpha}_{\tau} \subset L^2_{- \delta'}$, i.e any $\tau$ satisfying $2 \tau + 2n > - \delta'$.

We apply this to $\delta' = 2 \delta + 2n - 4 - \varepsilon$. Then $C^{2,\alpha}_{\delta} \subset W^{2,2}_{\delta' + 2}$ and moreover
\begin{align*} \Image \left( \Delta_{\delta} :  C^{2,\alpha}_{\delta} \rightarrow C^{0,\alpha}_{\delta - 2} \right) = \Image \left(  \Delta_{\delta} :  W^{2,2}_{\delta' + 2} \rightarrow L^2_{\delta' } \right) \cap C^{0,\alpha}_{\delta - 2},
\end{align*}
again by the regularity theory. So by the discussion of the previous paragraph, picking $\tau = \frac{1}{2}( \varepsilon - 2n - \delta' ) =  \varepsilon + 2 - 2n - \delta$, we have 
\begin{align}\label{almostresult} \Image \left( \Delta_{\delta} :  C^{2,\alpha}_{\delta} \rightarrow C^{0,\alpha}_{\delta - 2} \right) =   \left( \ker \Delta_{\varepsilon + 2 - 2n - \delta} \right)^{\perp} \cap C^{0,\alpha}_{\delta - 2}.
\end{align}
But since $\delta$ is not an indicial root, neither is $2-2n- \delta$, and so there is an open interval about $2-2n - \delta$ for which the kernel of $\Delta_{\delta}$ does not change. In particular, when $\varepsilon$ is chosen sufficiently small, we may replace $\varepsilon + 2 - 2n - \delta$ with $2 - 2n - \delta$ in equation (\ref{almostresult}), which is what we wanted to show.
\end{proof}

The weights relevant to us are $\delta \in (2-2n, 0)$. Applying Theorem \ref{mainlinearthm} to weights in this range, we see that the kernel of $\Delta_{\delta}$ equals the kernel with weight $0$, which is simply the kernel of $\Delta_{\delta}$ on the whole of $X$, which is just $\mathbb{C} \cdot \Id_E$. Since $2 -2n - \delta \in (2-2n, 0)$ if $\delta \in (2-2n, 0)$, it follows that the image is the orthogonal complement to the same kernel, i.e. the elements of $C^{0,\alpha}_{\delta-2}$ whose average is $0$. We have just proved:
\begin{cor}\label{relevantweights} Let $\delta \in (2-2n,0)$. Then $\Delta_{\delta}$ is Fredholm with kernel $\mathbb{C} \cdot \Id_E$ and image
\begin{align*} \Image \Delta_{\delta} = \left( \mathbb{C} \cdot \Id_E \right)^{\perp} \subset C^{0,\alpha}_{\delta-2} (X_p , \End E).
\end{align*}
\end{cor}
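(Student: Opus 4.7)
The proof is a direct application of Theorem \ref{mainlinearthm} together with the description of the kernel of the Laplacian on the compact manifold $X$ given by Lemma \ref{laplaciansimple}. First I would verify the Fredholm hypothesis: the set of indicial roots is $\mathbb{Z} \setminus (2-2n,0)$, which consists of integers outside the open interval $(2-2n,0)$. Any $\delta \in (2-2n,0)$ therefore lies in the complement of this set, so Theorem \ref{mainlinearthm} applies and $\Delta_\delta$ is Fredholm.

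Next I would identify $\ker \Delta_\delta$ for $\delta \in (2-2n,0)$. One inclusion is immediate: constant multiples of $\Id_E$ extend by $\pi^*$ to bounded harmonic sections of $\End E$ over $X_p$ and hence lie in $C^{k,\alpha}_\delta$ for any $\delta \leq 0$ (using the weight-comparison inequality (\ref{weightcomparison})-type reasoning, or just since bounded sections have finite weighted norm for non-positive weights). For the reverse inclusion, take $s \in \ker \Delta_\delta$; then $|s(w)| \lesssim |w|^\delta$ near $p$, and since $\delta > 2-2n$, the resulting local $L^2$-bound forces $s$ to extend across $p$ as a smooth harmonic section of $\End E$ on $X$ via the standard removable singularity theorem for harmonic sections in real dimension $2n$. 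By Lemma \ref{laplaciansimple}, since the holomorphic structure is simple, such sections are exactly $\mathbb{C}\cdot\Id_E$.

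Finally I would identify the image. Theorem \ref{mainlinearthm} gives
\begin{align*}
\Image(\Delta_\delta) \;=\; \ker(\Delta_{2-2n-\delta})^{\perp},
\end{align*}
and if $\delta \in (2-2n,0)$, then $2-2n-\delta \in (2-2n,0)$ as well, since the map $\delta \mapsto 2-2n-\delta$ preserves this interval. Applying the previous paragraph with weight $2-2n-\delta$ gives $\ker(\Delta_{2-2n-\delta}) = \mathbb{C}\cdot\Id_E$, and hence
\begin{align*}
\Image(\Delta_\delta) \;=\; (\mathbb{C}\cdot\Id_E)^{\perp} \;\subset\; C^{0,\alpha}_{\delta-2}(X_p,\End E),
\end{align*}
as claimed. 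The only point requiring care is the removable singularity step in the kernel computation; all the other content is bookkeeping with the Lockhart--McOwen framework set up in Theorem \ref{mainlinearthm}.
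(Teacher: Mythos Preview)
Your proposal is correct and follows essentially the same approach as the paper. The only cosmetic difference is in the kernel identification: the paper invokes the Lockhart--McOwen principle that the kernel of $\Delta_\delta$ is constant on the interval $(2-2n,0)$ (since it contains no indicial roots) and hence coincides with the kernel at weight $0$, which is the kernel on all of $X$; you instead argue directly via removable singularities, using that $\delta > 2-2n$ gives growth slower than the fundamental solution so that a harmonic section extends across $p$. These are two phrasings of the same mechanism, and the remainder of your argument (the involution $\delta \mapsto 2-2n-\delta$ preserving the interval, then applying the kernel computation at the dual weight) is exactly what the paper does.
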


\subsection{Linear theory on $\Bl_0 \mathbb{C}^n$}\label{localblowuplinear}
On $\Bl_0 \mathbb{C}^n$ we require the following result regarding the Laplacian operator acting on sections of the trivial vector bundle.
\begin{thm} Suppose $\delta \notin \mathbb{Z} \setminus (2-2n)$. Then the Laplacian operator $$\Delta_{\delta} : C^{k+2,\alpha}_{\delta} (\Bl_0 \mathbb{C}^n , \mathbb{C} ^m ) \rightarrow C^{k,\alpha}_{\delta -2} (\Bl_0 \mathbb{C}^n , \mathbb{C} ^m )$$ of the rank $r$ trivial vector bundle with its flat metric over $\Bl_0 \mathbb{C}^n$  with the Burns-Simanca metric is Fredholm. Moreover,
\begin{align*} \Image \Delta_{\delta} = \left( \textnormal{Ker } \Delta_{2-2n-\delta} \right)^{\perp} .
\end{align*}
\end{thm}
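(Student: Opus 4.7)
The plan is to reduce immediately to the scalar case and then follow the outline of the proof of Theorem \ref{mainlinearthm}. Since the bundle is trivial and the Hermitian metric is flat, the Chern connection vanishes identically, so by equation \eqref{laplacian} the operator $\Delta_{A_{\End E}}$ acts on sections of the trivial rank-$m$ bundle componentwise as the scalar Laplacian $\Delta_{\omega_{BS}}$ of the Burns-Simanca metric. It therefore suffices to prove the Fredholm statement and the image characterisation for the scalar operator $\Delta_{\omega_{BS}} : C^{k+2,\alpha}_\delta(\Bl_0\mathbb{C}^n) \to C^{k,\alpha}_{\delta-2}(\Bl_0\mathbb{C}^n)$, which is essentially the function-case result used in \cite{szekelyhidi14book}.

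For the Fredholm property I would exploit the asymptotic flatness of $\omega_{BS}$. Outside a fixed compact subset of $\Bl_0\mathbb{C}^n$ the Burns-Simanca metric differs from the Euclidean metric by $i\partial\bar\partial \varphi$ with $\varphi$ decaying polynomially in $|\zeta|$, so the coefficients of $\Delta_{\omega_{BS}} - \Delta_{\mathrm{Euc}}$ have strictly improved decay at infinity relative to those of $\Delta_{\mathrm{Euc}}$ itself. A computation completely analogous to Lemma \ref{laplacian-comparison}, but applied at the end $|\zeta|\to\infty$ rather than near a puncture, shows that this difference maps $C^{k+2,\alpha}_\delta$ to $C^{k,\alpha}_{\delta-2}$ as a relatively compact perturbation of $\Delta_{\mathrm{Euc}}$ in the weighted spaces on $\Bl_0 \mathbb{C}^n$. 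We are then in the setting of Lockhart-McOwen \cite{lockhartmcowen85}, and the Fredholm statement holds whenever $\delta$ avoids the indicial roots of $\Delta_{\mathrm{Euc}}$ on $\mathbb{R}^{2n}$. Separation of variables in spherical coordinates shows that these indicial roots are precisely the integers $\delta$ admitting a non-trivial homogeneous harmonic function on $\mathbb{R}^{2n}\setminus\{0\}$, namely $\delta \in \{0,1,2,\ldots\}\cup\{2-2n, 1-2n, \ldots\} = \mathbb{Z}\setminus(2-2n,0)$, which matches the exclusion in the statement.

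For the image characterisation I would repeat verbatim the duality-plus-regularity argument from the proof of Theorem \ref{mainlinearthm}. Namely, identify the dual of $L^2_{\delta'}$ with $L^2_{-\delta'}$ via the $L^2$ pairing, so that the image of $\Delta_{\omega_{BS}}$ acting on $W^{2,2}_{\delta'+2}$ is the orthogonal complement of the kernel of its formal adjoint on $L^2_{-\delta'}$, and then use weighted elliptic regularity to upgrade distributional solutions to $C^{k+2,\alpha}_{2-2n-\delta}$. Choosing $\delta' = 2\delta + 2n - 4 - \varepsilon$, and using the hypothesis that $\delta$ is not an indicial root to slide the weight from $\varepsilon + 2 - 2n - \delta$ to $2-2n-\delta$ through a small open interval in which the kernel does not change, yields the desired identification $\Image \Delta_\delta = (\ker \Delta_{2-2n-\delta})^{\perp}$.

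The step I expect to be the main obstacle is verifying the compact-perturbation claim for $\Delta_{\omega_{BS}} - \Delta_{\mathrm{Euc}}$ in these weighted spaces on $\Bl_0\mathbb{C}^n$: the non-compact end is at $|\zeta|\to\infty$ rather than at a puncture, and one must check that the Burns-Simanca decay rate is fast enough relative to the weights appearing in the H\"older scale of Section \ref{weighted-section}, and also produce a weighted $L^2$-scale on $\Bl_0\mathbb{C}^n$ analogous to the one on $X_p$ in order to run the duality argument. Since $\omega_{BS}$ is given explicitly and is asymptotically Euclidean to a definite polynomial order, this is routine but requires careful bookkeeping of the weights.
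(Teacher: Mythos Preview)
Your reduction to the scalar case via the flat Chern connection is exactly the paper's argument; the paper then simply invokes the function case as a black box, citing \cite[Theorem 8.6]{szekelyhidi14book}, rather than re-deriving it. Your additional sketch of the scalar result via Lockhart--McOwen and the duality/regularity argument is correct in outline and parallels the proof of Theorem \ref{mainlinearthm}, but it is more than the paper asks for here.
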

The proof is to reduce to the function case. This is possible since we are using the flat Hermitian metric on the trivial bundle. The Chern connection then simply becomes the operator $d$ acting component-wise, and so the Laplacian operator associated to the flat metric is simply the Laplacian operator on functions associated to the base metric acting on each component.  For the result in the function case, see \cite[Theorem 8.6]{szekelyhidi14book} and the references therein.

The weights which will be relevant to us are the negative weights. For these we require the following.
\begin{cor}\label{relevantwtsblowupcn} Suppose $\delta <0$. Then the kernel of  $$\Delta_{\delta} : C^{k+2,\alpha}_{\delta} (\Bl_0 \mathbb{C}^n , \mathbb{C} ^m ) \rightarrow C^{k,\alpha}_{\delta - 2} (\Bl_0 \mathbb{C}^n , \mathbb{C} ^m )$$ is trivial.
\end{cor}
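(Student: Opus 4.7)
The plan is to reduce to the scalar case and then apply the maximum principle on a compact exhaustion. Since we are using the trivial bundle with its flat Hermitian metric, the Chern connection is simply $d$ acting component-wise, and the curvature term $\bar{\partial}\mathcal{A}$ appearing in equation (\ref{pertop}) vanishes identically. Consequently, the Laplacian $\Delta_{\delta}$ decouples into $m$ copies of the scalar Laplacian $\Delta_{\omega_{BS}}$ of the Burns-Simanca metric, acting on $\mathbb{C}$-valued functions. It therefore suffices to show that any complex-valued $u \in C^{k+2,\alpha}_{\delta}(\Bl_0 \mathbb{C}^n, \mathbb{C})$ with $\delta < 0$ and $\Delta_{\omega_{BS}} u = 0$ is identically zero.

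For this, I would use that $\delta < 0$ forces decay at infinity: unpacking the definition of the weighted norm on $\Bl_0 \mathbb{C}^n$, any $u \in C^{k+2,\alpha}_{\delta}$ satisfies $|u(\zeta)| \leq C |\zeta|^{\delta}$ in the asymptotic region $|\zeta| \geq 1$, so $|u(\zeta)| \to 0$ as $|\zeta| \to \infty$. Consider the compact exhaustion $K_R = \pi^{-1}(\bar{B}_R) \subset \Bl_0 \mathbb{C}^n$ for large $R$, with boundary $\partial K_R = \{ |\zeta| = R\}$ sitting in the asymptotically Euclidean region. The real and imaginary parts of $u$ are harmonic with respect to $\omega_{BS}$ on the relatively compact domain $K_R$, so by the classical maximum principle,
\begin{align*}
\sup_{K_R} \textnormal{Re}(u) \;=\; \sup_{\partial K_R} \textnormal{Re}(u) \;\leq\; \sup_{\partial K_R} |u|,
\end{align*}
and the right-hand side tends to $0$ as $R \to \infty$. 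Hence $\textnormal{Re}(u) \leq 0$ on all of $\Bl_0 \mathbb{C}^n$. Applying the same argument to $-u$ and to $\pm \textnormal{Im}(u)$ yields $u \equiv 0$.

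I do not anticipate any serious obstacle: the Burns-Simanca metric is complete and its Laplacian is a bona fide second-order elliptic operator, so the maximum principle applies without qualification on each $K_R$. The step that actually does the work is the reduction to the scalar setting, which depends crucially on the triviality of the bundle and the flatness of the Hermitian metric; for a twisted Laplacian on a nontrivial bundle the component decoupling would fail, and a more delicate argument (via indicial root analysis, as in Theorem \ref{mainlinearthm}) would be required instead.
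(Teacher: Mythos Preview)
Your proof is correct. Both you and the paper make the same initial reduction: since the bundle is trivial with its flat Hermitian metric, the Laplacian acts component-wise as the scalar Laplacian of the Burns--Simanca metric, so it suffices to treat the case $m=1$. At that point the paper simply cites the known scalar result (from Sz\'ekelyhidi's book and the Lockhart--McOwen theory behind it), whereas you supply a direct maximum-principle argument, using that $\delta<0$ forces $|u(\zeta)|\leq C|\zeta|^{\delta}\to 0$ at infinity and exhausting by the compact domains $K_R=\pi^{-1}(\bar B_R)$. Your route is more elementary and self-contained, avoiding the Fredholm and indicial-root machinery entirely; the paper's route is shorter on the page but relies on heavier background results. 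Both are perfectly valid here because the operator has no zeroth-order term, so harmonic functions genuinely satisfy the strong maximum principle on each $K_R$.
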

Again this is a direct consequence of the result for the case $m=1$.

\subsection{Linear theory on $\Bl_p X$}\label{blowuplinear}

On $\Bl_p X$ we are considering the bundle $\pi^* E$, where $\pi : \Bl_p X \rightarrow X$ is the blow-down map. To simplify notation, we will also denote the pullback bundle by $E$. We will let $\omega_{\varepsilon}$ and $h_{\varepsilon}$ be a base metric and bundle metric, respectively, which will both depend on $\varepsilon$ and which we will now define. 

The base metric is defined as in \cite{szekelyhidi12}. This uses the Burns-Simanca metric on $\Bl_0 \mathbb{C}^n$ whose associated K\"ahler form $\eta$ can be written as 
\begin{align*} \eta = i \partial \bar{\partial} ( |\zeta|^2 + \psi ( \zeta ) ),
\end{align*}
where $\psi = O(|\zeta|^{4-2n})$ if $n>2$ and $\psi = \log(|\zeta|)$ if $n=2$, where $\zeta = \varepsilon^{-1} z$. Recall also that $\omega$ can be written as 
\begin{align*} \omega = i \partial \bar{\partial} ( |z|^2 + \varphi(z) ),
\end{align*}
where $\varphi = O (|z|^4)$.

Recall that $r_{\varepsilon} = \varepsilon^{\kappa}$ where $\kappa = \frac{n-1}{n}$. We have let $\gamma$ be a smooth function $\mathbb{R} \rightarrow [0,1]$ which is zero for $x \leq 1$ and $1$ for $x \geq 2$. Further we let $\gamma_1 (z) = \gamma (\frac{|z|}{r_{\varepsilon}})$ and let $\gamma_2 = 1- \gamma_1$. The metric $\omega_{\varepsilon}\in [\pi^*\omega] - \epsilon c_1(F)$ on $\Bl_p X$ is defined to be 
\begin{align}\label{basemetric} \omega_{\varepsilon}=
  \begin{cases}
   \omega      & \quad \text{on } X \setminus B_{2 r_{\varepsilon}} \\
    i \partial \bar{\partial} \left( |z|^2 + \gamma_1 (z) \varphi (z) + \varepsilon^2 \gamma_2 (z) \psi (\varepsilon^{-1} z) \right)     & \quad \text{on }  B_{2 r_{\varepsilon}} \setminus B_{r_{\varepsilon}} \\
    \varepsilon^2 \eta      & \quad \text{on } \pi^{-1} (B_{r_{\varepsilon}} ) \\
  \end{cases}
\end{align}
The point of choosing $\kappa$ to be $\frac{n-1}{n}$ is that the K\"ahler potential above then is $O(|z|^4)$, independently of $\varepsilon$, in the annular region. In other words, we have that
\begin{align}\label{potentialbound}  \gamma_1 (z) \varphi (z) + \varepsilon^2 \gamma_2 (z) \psi (\varepsilon^{-1} z) = O(|z|^4),
\end{align}
where the $O(|z|^4)$-term is understood to be independent of $\varepsilon$. 

The bundle metric $h_{\varepsilon}$ is defined similarly, using the flat metric $h_{\textnormal{flat}}$ on the trivial rank $r$ bundle over $\Bl_0 \mathbb{C}^n$. More precisely, if $h$ denotes the bundle metric on $E \rightarrow X$ and $\gamma_1$ and $\gamma_2$ are the cut-off functions of the previous section, then we set
\begin{align}\label{bundlemetric} h_{\varepsilon}=
  \begin{cases}
    h      & \quad \text{on } X \setminus B_{2 r_{\varepsilon}} \\
    \gamma_1 h + \gamma_2 h_{\textnormal{flat}}     & \quad \text{on }  B_{2 r_{\varepsilon}} \setminus B_{r_{\varepsilon}} \\
    h_{\textnormal{flat}}      & \quad \text{on } \pi^{-1} (B_{r_{\varepsilon}} ) \\
  \end{cases}
\end{align}

We will later use the following bound on the $\gamma_i$:
\begin{align}\label{cutoffbound} \| \gamma_i \|_{C^{4,\alpha}_0 (\Bl_p X)} \leq c.
\end{align}

The first operator of interest in this section is essentially the Laplace operator $\Delta_{\varepsilon}=\Delta_{\epsilon,\End E}$ associated to the Chern connection $A_{\varepsilon,\End E}$ for the Hermitian metric $h_{\varepsilon}$ on $\pi^*E \rightarrow \Bl_p X$. Letting $q$ be some arbitrary, but fixed, point in $X \setminus \{ p \}$,  the main result is the following. 
\begin{thm}\label{blowuplinearthm} Let $\delta \in (2-2n, 0)$. Then the operator $\tilde{\Delta}_{\varepsilon} : C^{2,\alpha}_{\delta} (\Bl_p X, \End E ) \rightarrow C^{0,\alpha}_{\delta-2} (\Bl_p X, \End E )$ given by
\begin{align*} f \mapsto \Delta_{\varepsilon} (f) - \tr_q (f) \cdot \Id_E
\end{align*}
is an isomorphism. Moreover, the inverse of $\tilde{\Delta}_{\varepsilon}$ has bounded operator norm, $\| \tilde{\Delta}_{\varepsilon}^{-1}  \| \leq C$, independently of $\varepsilon$. 
\end{thm}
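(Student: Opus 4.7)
The proof has two stages: first, showing that $\tilde{\Delta}_\varepsilon$ is an isomorphism for each fixed $\varepsilon>0$, and second, obtaining the uniform operator-norm bound via a blowup argument modelled on the two linear theories developed in Sections \ref{puncturedlinear}--\ref{localblowuplinear}.

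For fixed $\varepsilon>0$, $\Bl_p X$ is compact and $\pi^*E$ is simple (as will be verified in Proposition \ref{piEsimple}), so by Lemma \ref{laplaciansimple} the Laplacian $\Delta_\varepsilon$ is Fredholm of index zero with kernel and cokernel both equal to $\C\cdot\Id_E$. The modification $f\mapsto\tr_q(f)\cdot\Id_E$ is a bounded finite-rank operator, so $\tilde{\Delta}_\varepsilon$ remains Fredholm of index zero. If $\tilde{\Delta}_\varepsilon(f)=0$ then $\Delta_\varepsilon(f)=\tr_q(f)\Id_E$; the $L^2$-orthogonality of $\Image\Delta_\varepsilon$ to $\C\cdot\Id_E$ on the compact manifold $\Bl_p X$ forces $\tr_q(f)=0$, so $f\in\C\cdot\Id_E$, and then the trace condition gives $f=0$. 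Hence $\tilde{\Delta}_\varepsilon$ is an isomorphism for each $\varepsilon$.

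For the uniform bound I argue by contradiction. Suppose there exist $\varepsilon_i\to 0$ and $f_i\in C^{2,\alpha}_\delta(\Bl_p X,\End E)$ with $\|f_i\|_{C^{2,\alpha}_\delta}=1$ and $\|\tilde{\Delta}_{\varepsilon_i}f_i\|_{C^{0,\alpha}_{\delta-2}}\to 0$. Since $q$ lies outside the gluing region, $\tr_q(f_i)$ is bounded, and after passing to a subsequence $\tr_q(f_i)\to c\in\C$. Using the equivalent description of the weighted norm via the cut-offs $\gamma_1,\gamma_2$, a definite fraction of the unit norm is concentrated in at least one of three regions: (a) the outer part $X\setminus B_{r_{\varepsilon_i}}(p)$, (b) an intermediate annulus $B_{2r_i}\setminus B_{r_i}$ with $\varepsilon_i\ll r_i\ll 1$, or (c) the interior $\pi^{-1}(B_{\varepsilon_i})$. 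In case (a), a diagonal Arzela--Ascoli argument combined with Schauder estimates applied to $\tilde{\Delta}_{\varepsilon_i}f_i\to 0$ yields a limit $f_\infty\in C^{2,\alpha}_\delta(X_p,\End E)$ solving $\Delta f_\infty = c\cdot\Id_E$ and having strictly positive $C^{2,\alpha'}$-norm on some compact subset of $X_p$. By Corollary \ref{relevantweights}, the image of $\Delta_\delta$ on $X_p$ is $(\C\cdot\Id_E)^\perp$, forcing $c=0$ and hence $f_\infty\in\C\cdot\Id_E$; the trace condition then gives $f_\infty=0$, a contradiction. In case (c), set $\tilde{f}_i(\zeta)=\varepsilon_i^{-\delta}f_i(\varepsilon_i\zeta)$. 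Because $\omega_{\varepsilon_i}=\varepsilon_i^2\eta$ and $h_{\varepsilon_i}=h_{\mathrm{flat}}$ on $\pi^{-1}(B_{\varepsilon_i})$, the equation rescales to
\[
\Delta_\eta\tilde{f}_i(\zeta) = \varepsilon_i^{2-\delta}\bigl(\tr_q(f_i)\Id_E + o(1)\bigr)(\varepsilon_i\zeta).
\]
The factor $\varepsilon_i^{2-\delta}\to 0$ since $\delta<0<2$, while the $\varepsilon^{-(\delta-2)}$ weight built into $\|\cdot\|_{C^{0,\alpha}_{\delta-2}}$ is precisely compensated by the rescaling, so the right-hand side tends to zero uniformly on compact subsets of $\Bl_0\C^n$. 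The resulting limit $\tilde{f}_\infty\in C^{2,\alpha}_\delta(\Bl_0\C^n,\C^{m^2})$ is non-trivial but satisfies $\Delta_\eta\tilde{f}_\infty=0$, contradicting Corollary \ref{relevantwtsblowupcn} for $\delta<0$. Case (b) is treated by rescaling by $r_i$: the rescaled sections converge on an annulus in $\C^n\setminus\{0\}$ with flat metric to a weighted solution of the Euclidean Laplace equation, and since $\delta\in(2-2n,0)$ avoids all integer indicial roots, a Liouville-type indicial analysis forces this limit to vanish.

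The main obstacle is the uniform treatment of the neck region and the compatibility of the three scalings. This hinges on the equivalent description of $\|\cdot\|_{C^{2,\alpha}_\delta(\Bl_p X,E)}$ via $\gamma_1,\gamma_2$ from Section \ref{weighted-section}, which allows concentration of norm in any single region to be extracted as a genuine non-trivial limit of an elliptic problem in one of the model spaces, together with a careful verification that the rescaling of $\tilde{\Delta}_{\varepsilon_i}f_i\to 0$ is compatible with the rescaling of the norm so that each limit satisfies the relevant homogeneous Laplace equation.
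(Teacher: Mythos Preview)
Your overall architecture matches the paper's: isomorphism for fixed $\varepsilon$ via simplicity of $\pi^*E$, then a contradiction argument blowing up to one of the three model spaces $X_p$, $\Bl_0\C^n$, or $\C^n\setminus\{0\}$, invoking Corollaries \ref{relevantweights} and \ref{relevantwtsblowupcn} and the Euclidean Liouville theorem respectively. Your treatment of $\tr_q(f_i)$ differs slightly from the paper, which shows $\tr_q(s_i)\to 0$ at the outset by pairing $\tilde{\Delta}_{\varepsilon_i}s_i$ against $\Id_E$ over the compact manifold $\Bl_p X$ and using self-adjointness of $\Delta_{\varepsilon_i}$; your deferred argument in case~(a) via $\Image\Delta_\delta=(\C\cdot\Id_E)^\perp$ also works.

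There is, however, a genuine gap in the passage from ``a definite fraction of the unit norm is concentrated in one region'' to ``the limit there is non-trivial''. Concentration of the $C^{2,\alpha}_\delta$ norm does not by itself force a non-zero limit: the norm may be carried by the H\"older seminorm of second derivatives, which is not lower semicontinuous under $C^{2,\beta}$ convergence for $\beta<\alpha$. The paper closes this gap by first proving, as a separate lemma, a uniform Schauder estimate
\[
\|s\|_{C^{2,\alpha}_\delta}\le c\bigl(\|\tilde{\Delta}_\varepsilon s\|_{C^{0,\alpha}_{\delta-2}}+\|s\|_{C^0_\delta}\bigr)
\]
with $c$ independent of $\varepsilon$, assembled region by region from local Schauder estimates on $X\setminus B_1$, on the rescaled annuli $B_2\setminus B_1$, and on $\pi^{-1}(B_\varepsilon)$. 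Under the contradiction hypothesis this forces $\|s_i\|_{C^0_\delta}$ to be bounded below; after renormalising so that $\|s_i\|_{C^0_\delta}=1$ one selects actual points $q_i$ with $|\varrho_i^{-\delta}(q_i)\,s_i(q_i)|=1$, and the dichotomy on whether $\varepsilon_i^{-1}\varrho_i(q_i)$ remains bounded replaces your region split. Non-triviality of each blow-up limit is then immediate from the pointwise value at $q=\lim q_i$. Your sketch is correct in spirit, but this $C^0$ reduction is precisely the device that makes the concentration argument rigorous, and it should be stated and proved before the contradiction step.
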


The proof of this uses an analogous technique to the work of Biquard-Rollin \cite{BR}. We follow the strategy of Sz\'ekelyhidi in the context of the linearisation of the scalar curvature on K\"ahler manifolds \cite[Theorem 8.14]{szekelyhidi14book}.

Note that the main content of this theorem is the bound on the inverse of $\tilde{\Delta}_{\varepsilon}$. It will follow from the simplicity of $\pi^* E$ that $\tilde{\Delta}_{\varepsilon}$ is an isomorphism as $\Bl_p X$ is compact and the spaces $C^{k,\alpha}_{\delta}$ are nothing but the usual $C^{k,\alpha}$-spaces with a different, but equivalent norm, for each $\varepsilon$. To ensure that the vector bundle Laplacian is invertible for $\pi^*E$, we use the following. 

\begin{prop}\label{piEsimple} Suppose $E$ is a simple vector bundle over a complex manifold $X$, and let $p: Y\to X$ be a bimeromorphic morphism. Then $p^*E$ is simple. \end{prop}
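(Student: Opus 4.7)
The plan is to identify global holomorphic endomorphisms of $p^*E$ with those of $E$, and then invoke simplicity of $E$. Recall that a holomorphic bundle $F$ is simple precisely when $H^0(\End F) = \mathbb{C}\cdot \Id_F$, so the task reduces to proving $H^0(Y, \End(p^*E)) = \mathbb{C}\cdot \Id_{p^*E}$. The starting point is the canonical bundle isomorphism $\End(p^*E) \cong p^*\End(E)$, so it suffices to show that every global section of $p^*\End(E)$ is the pullback of a (necessarily unique) global section of $\End(E)$.

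Given $\phi \in H^0(Y, p^*\End(E))$, I would argue as follows. Since $p$ is bimeromorphic, there is a proper analytic subset $Z \subset X$ such that $p$ restricts to a biholomorphism $p^{-1}(X \setminus Z) \to X \setminus Z$; in the setting used in this paper, $p$ is the blow-up of a smooth manifold of dimension $n \geq 2$ at finitely many points, so $Z$ has codimension at least two. Transporting $\phi$ across this biholomorphism gives a section $\psi \in H^0(X \setminus Z, \End E)$. Because $\End E$ is locally free and $Z$ has codimension $\geq 2$ in the smooth manifold $X$, the Hartogs extension theorem produces a unique $\tilde\psi \in H^0(X, \End E)$ restricting to $\psi$. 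Simplicity of $E$ then forces $\tilde\psi = c \cdot \Id_E$ for some $c \in \mathbb{C}$, and the sections $\phi$ and $c \cdot \Id_{p^*E}$ agree on the dense open set $p^{-1}(X \setminus Z)$, hence on all of $Y$ by the identity principle for holomorphic sections.

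I do not expect a serious obstacle. The only non-formal ingredient is that the non-isomorphism locus of $p$ in $X$ has codimension at least two, which for the blow-ups of points actually used in the paper is obvious, and in the full bimeromorphic generality follows from the standard fact that a divisor contracted by a proper bimeromorphic morphism between smooth complex manifolds of the same dimension must map to a subset of codimension $\geq 2$. An essentially equivalent but slicker route is via the projection formula together with $p_*\mathcal{O}_Y = \mathcal{O}_X$ (valid for proper bimeromorphic morphisms to a normal target): it gives $p_*(p^*\End E) = \End E$ immediately, and hence $H^0(Y, \End p^*E) = H^0(X, \End E) = \mathbb{C} \cdot \Id_E$, as required.
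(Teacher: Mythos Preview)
Your proof is correct and takes a genuinely different route from the paper's. The paper argues via its stated (non-standard) definition of simple as \emph{indecomposable}: it assumes $p^*E \cong A \oplus B$, pushes the summands forward using Grauert's coherence theorem, and then extends the isomorphism $p_*A \oplus p_*B \cong E$ from $X\setminus Z$ across the codimension-two locus $Z$ to contradict indecomposability of $E$. You instead use the standard characterisation $H^0(\End F) = \mathbb{C}\cdot\Id_F$ and extend a global endomorphism of $p^*E$ down to $X$ via Hartogs. Your argument is more elementary (no Grauert, no extension of coherent-sheaf isomorphisms) and is in fact better aligned with how the result is used: Lemma~\ref{laplaciansimple} requires precisely that the kernel of the endomorphism Laplacian be $\mathbb{C}\cdot\Id$, i.e.\ simplicity in your sense, which is a priori stronger than the paper's indecomposability. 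The projection-formula variant you sketch ($p_*\mathcal{O}_Y=\mathcal{O}_X$ plus $p_*p^*\End E \cong \End E$) is slicker still and makes the identification $H^0(Y,\End p^*E)\cong H^0(X,\End E)$ immediate. One minor caveat: since the paper literally defines ``simple'' as ``not a nontrivial direct sum'', you are technically proving a different (and stronger) statement than Proposition~\ref{piEsimple} as written, but it is the one actually needed downstream.
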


\begin{proof} The largest analytic $Z\subset X$ such that $Y\backslash p^{-1}(Z)\cong X\backslash Z$ satisfies $\dim Z\leq \dim X - 2$ since $X$ is smooth and $p$ is bimeromorphic. Suppose $p^*E \cong A\oplus B$ for holomorphic subbundles $A,B$. The direct images $p_*A, p_*B$ are coherent by Grauert's Theorem, and on $X\backslash Z$ we have an isomorphism $p_*A\oplus p_*B \cong E$. Then since $p_*A\oplus p_*B $ and $E$ are coherent sheaves which are isomorphic away from $Z$, which has codimension two, they must be isomorphic on all of $X$ as $X$ is smooth. This contradicts the fact that $E$ was assumed to be simple, giving the result.
 \end{proof}
 
We apply this to the map $\pi: \Bl_p X \to X$ to conclude that $\pi^*E$ is a simple holomorphic vector bundle. It then easily follows as above that, after adding the term $\tr_q \cdot \Id_E$, the similarly defined operator $\tilde{\Upsilon}_{\varepsilon}$ is an isomorphism. To conclude that $\tilde{\Delta}_{\varepsilon}$ also is an isomorphism, we would then need to show that the difference between the two operators becomes small as $\varepsilon \to 0$. We will however just consider $\tilde{\Delta}_{\varepsilon}$ directly, but it will be clear (see the proof of Lemma \ref{approxsol}) that this difference does indeed go to $0$ with $\varepsilon$.

Before beginning the proof of Theorem \ref{blowuplinearthm}, we will first prove the following Schauder estimate. For ease of notation, we write $C^{k,\alpha}_{\delta}$ for $C^{k,\alpha}_{\delta} (\Bl_p X, \End E)$ throughout the proof.
\begin{lem} There exists a constant $c>0$, independently of $\varepsilon$, such that
\begin{align}\label{firstschauder} \| s \|_{C^{2,\alpha}_{\delta}} \leq c \left( \| \tilde{\Delta}_{\varepsilon} (s)\|_{C^{0,\alpha}_{\delta-2}} + \| s\|_{C^{0}_{\delta}} \right).
\end{align}
\end{lem}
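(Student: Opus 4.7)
The plan is to bound each of the three summands in the weighted norm $\|s\|_{C^{2,\alpha}_\delta(\Bl_p X,\End E)}$ separately by applying a standard Schauder estimate after rescaling to a model geometry in which the operator is uniformly elliptic and $\varepsilon$-close to a fixed model. On the outer piece $X\setminus B_1(p)$, where $\omega_\varepsilon=\omega$ and $h_\varepsilon=h$ are $\varepsilon$-independent, I would apply the classical interior Schauder estimate on $X\setminus B_1(p)$ using data on the slight enlargement $X\setminus B_{1/2}(p)$. The trace correction $-\tr_q(\cdot)\Id_E$ is a bounded zero-order perturbation handled by $|\tr_q(s)|\le c\|s\|_{C^0_\delta}$, and on this fixed region weighted and unweighted norms are equivalent, so the outer summand is controlled by the right-hand side of \eqref{firstschauder}.

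For the middle scales $r\in(\varepsilon,1)$, I would fix $r$ and rescale by $z\mapsto rz$ so that $s_r^\delta$ lives on the fixed enlarged annulus $B_{5/2}\setminus B_{1/2}$. The key point is that, by \eqref{potentialbound} and the choice $r_\varepsilon=\varepsilon^{(n-1)/n}$, the pulled-back K\"ahler potential is $O(|z|^4)$ uniformly in $r$ and $\varepsilon$, so the rescaled base metric $r^{-2}\omega_\varepsilon$ tends in $C^{k,\alpha}$ to the flat Euclidean metric uniformly in $r,\varepsilon$. The rescaled Hermitian metric likewise converges to the trivial flat metric in a parallel frame at $p$, since by \eqref{bundlemetric} and \eqref{cutoffbound} one has $h_\varepsilon=\Id+O(|z|)$ in each of the three pieces of the gluing. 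Combined with the expression \eqref{pertop} and the reasoning in the proof of Lemma~\ref{laplacian-comparison}, this shows that the rescaled operator $r^2\Delta_\varepsilon$ differs from the componentwise flat Laplacian by terms that are uniformly small in $C^{k,\alpha}$. Standard interior Schauder on $B_2\setminus B_1$, using data on $B_{5/2}\setminus B_{1/2}$, then yields
\begin{equation*}
\|s_r^\delta\|_{C^{2,\alpha}(B_2\setminus B_1)}\le c\bigl(\|(\Delta_\varepsilon s)_r^{\delta-2}\|_{C^{0,\alpha}(B_{5/2}\setminus B_{1/2})}+\|s_r^\delta\|_{C^0(B_{5/2}\setminus B_{1/2})}\bigr)
\end{equation*}
with constant uniform in $r\in(\varepsilon,1)$ and $\varepsilon$. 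Covering the enlarged annulus by $O(1)$ neighbouring dyadic scales (which may spill over into the outer or inner regions at the endpoints), taking the supremum in $r$, and absorbing the trace correction into a $\|s\|_{C^0_\delta}$-term bounds the middle summand.

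For the inner piece, I would pass to coordinates $\zeta=\varepsilon^{-1}z$ on $\Bl_0\C^n$. By \eqref{basemetric} and \eqref{bundlemetric}, for $\varepsilon$ small enough that $2\varepsilon<r_\varepsilon$ (which is automatic since $r_\varepsilon/\varepsilon=\varepsilon^{-1/n}\to\infty$), the rescaled base metric $\varepsilon^{-2}\omega_\varepsilon$ is exactly the Burns--Simanca metric $\eta$ and the rescaled Hermitian metric is exactly the flat metric $h_{\textnormal{flat}}$ on $\pi^{-1}(B_2)\subset\Bl_0\C^n$. The rescaled operator is therefore the $\varepsilon$-independent Laplacian of $\eta$ acting componentwise on $\mathbb{C}^{m^2}$, and classical interior Schauder on $\pi^{-1}(B_1)\subset\Bl_0\C^n$ using data on $\pi^{-1}(B_2)$ controls the inner summand by the right-hand side of \eqref{firstschauder} with an $\varepsilon$-independent constant.

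Summing the three bounds gives \eqref{firstschauder}. The main obstacle is the middle step: verifying that the rescaled family $r^2\Delta_\varepsilon$ is uniformly close to the fixed flat componentwise Laplacian across all scales $r\in(\varepsilon,1)$ and all small $\varepsilon$, which is precisely where the potential estimate \eqref{potentialbound}, the formula \eqref{pertop} for $\Delta_{A_{\End E}}$, and the choice $r_\varepsilon=\varepsilon^{(n-1)/n}$ come into play. Once this uniform convergence is established, the Schauder estimates themselves are classical.
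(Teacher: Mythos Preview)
Your proposal is correct and follows essentially the same approach as the paper: split $\Bl_p X$ into the outer region $X\setminus B_1$, the annular scales $r\in(\varepsilon,1)$, and the inner region $\pi^{-1}(B_\varepsilon)$, and in each region apply a standard interior Schauder estimate after rescaling to a model geometry in which the operator is uniformly elliptic with $\varepsilon$-independent constants. The paper phrases the annular step slightly differently---it uses the enlarged annulus $B_3\setminus B_{1/2}$ and appeals to uniform equivalence of $(\omega_\varepsilon,h_\varepsilon)$ with the flat model rather than to Lemma~\ref{laplacian-comparison} directly---but the content is the same, and your explicit handling of the trace correction and of the endpoint overlaps is in the same spirit.
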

\begin{proof} First note that on the region away from the exceptional divisor, the usual Schauder estimates imply that we have the bound $$ \| s \|_{C^{2,\alpha} (X \setminus B_1)} \leq c \left( \| \tilde{\Delta}_{\varepsilon} (s)\|_{C^{0,\alpha} (X \setminus B_1) } + \| s\|_{C^{0} (X \setminus B_{\frac{1}{2}}) } \right) ,$$ and the restriction of the weighted norm to $M \setminus B_{\frac{1}{2}}$ is equivalent to the usual H\"older norm with the constant of equivalency depending only on $\delta$, not $\varepsilon$. Note that the operator $\tilde{\Delta}_{\varepsilon}$ is the same for all values of $\varepsilon < \frac{1}{2}$, so $c$ is independent of $\varepsilon$.

Secondly, the preimage of $B_{\varepsilon}$ is the region $|\zeta| \leq 1$ thought of as a subset of $\Bl_0 \mathbb{C}^n$. Since the annular region on which we are matching the K\"ahler metrics and Hermitian metrics gets larger and larger thought of as a subset of $\Bl_0 \mathbb{C}^n$, for $\varepsilon >0$ sufficiently small the metrics on the region $|\zeta| \leq 2$ are always the Burn-Simanca metric and the flat Hermitian metric. Thus we get a bound$$ \| s \|_{C^{2,\alpha} ( \{ |\zeta| < 1 \} )} \leq c \left( \| \tilde{\Delta}_{\varepsilon} (s)\|_{C^{0,\alpha} ( \{ |\zeta| < 1 \} ) } + \| s\|_{C^{0} ( \{ |\zeta | < 2 \} ) } \right) ,$$ with $c$ independent of $\varepsilon$.  When we scale, we introduce a factor of $\varepsilon^2$ on $\tilde{\Delta}_{\varepsilon} (s)$, which implies that the above inequality gives us that we have a bound $$ \| s \|_{C^{2,\alpha}_{\delta} ( \pi^{-1} (B_{\varepsilon} ) )} \leq c \left( \| \tilde{\Delta}_{\varepsilon} (s)\|_{C^{0,\alpha}_{\delta - 2} (\pi^{-1} (B_{\varepsilon} ) ) } + \| s\|_{C^{0}_{\delta} ( \pi^{-1} (B_{2 \varepsilon} ) ) } \right) ,$$ 
with $c$ independent of $\varepsilon$. 

Finally, for the annular region, note that we could define an equivalent norm to the $C^{k,\alpha}_{\delta}$-norm by 
\begin{align}\label{equivalentnorm} \|  s \|_{C^{k,\alpha}(X \setminus B_1, E)} + \sup_{r\in (\varepsilon, 1)} \| s_r^{\delta} \|_{C^{k,\alpha} (B_3 \setminus B_{\frac{1}{2}}, E)} + \| \tilde{s}^{\delta}_{\varepsilon} \|_{C^{k,\alpha}_{\delta} (\pi^{-1}(B_{\varepsilon} (p)), \mathbb{C} ^m)},
\end{align}
i.e. we are enlarging the annular regions in the middle term. The constant of equivalency depends on $\delta$, but \textit{not} on $\varepsilon$. Next recall that $\omega_{\varepsilon}$ and $h_{\varepsilon}$ are uniformly equivalent to the Euclidean metric and the flat metric, respectively, on the annular region. Thus $r^{-2} \tilde{\Delta}_{\varepsilon} (s^{\delta}_r )$ has a uniform estimate on the $C^{k,\alpha}$-norm of its components and its constant of ellipticity. The factor $r^{-2}$ arises because $\tilde{\Delta}_{\varepsilon}$ is a second order operator.

 In particular, the local Schauder estimates imply that
\begin{align*}\| s_r^{\delta} \|_{C^{2,\alpha} (B_2 \setminus B_{1}, E)}  \leq C \left( \| r^{-2} \tilde{\Delta}_{\varepsilon} (s_r^{\delta}) \|_{C^{0,\alpha} (B_2 \setminus B_{1}, E)}  +  \| s_r^{\delta} \|_{C^{0} (B_3 \setminus B_{\frac{1}{2}}, E)} \right), 
\end{align*}
where $C$ is independent of $\varepsilon$. Taking supremums and using that equation (\ref{equivalentnorm}) defines an equivalent norm with constant of equivalency independent of $\varepsilon$, we precisely obtain the estimate (\ref{firstschauder}).
\end{proof}

We will now improve the above bound to show that the term $\| s\|_{C^{0}_{\delta}}$ is superfluous. This precisely gives a bound on the operator norm of $\tilde{\Delta}_{\varepsilon}^{-1}$, which is what is needed to complete the proof of Theorem \ref{blowuplinearthm}. 
\begin{proof}
We will prove that we do not need the term $\| s\|_{C^{0}_{\delta}}$ in the estimate (\ref{firstschauder}) by contradiction. So suppose this cannot be removed. Then we can find sequence $\varepsilon_i >0$ tending to zero and sections $s_i$ of $\End E$ over $\Bl_p X$  with $\| s_i \|_{C^{2,\alpha}_{\delta}} = 1$ and 
\begin{align}\label{Lbound}  \| \tilde{\Delta}_{\varepsilon_i} (s_i)\|_{C^{0,\alpha}_{\delta-2}} < \frac{1}{i}.
\end{align}
This implies that there is a $C>0$ such that
\begin{align*} \left| \int_{\Bl_p X} \langle \tilde{\Delta}_{\varepsilon_i} (s_i ), \Id_E \rangle \omega_{\varepsilon_i}^n \right| < \frac{C}{i}.
\end{align*}
Since $\Delta$ is self-adjoint and contains $\mathbb{C}\cdot \Id_E$ in its kernel, the integral of $\langle \Delta (s_i ), \Id_E \rangle $ is $0$, and so we obtain the estimate
\begin{align*} |\tr_q (s_i) | < \frac{C}{i}.
\end{align*}

Since our sequence satisfies $\| s_i \|_{C^{2,\alpha}_{\delta}} = 1$ on $\Bl_p X$, we have uniform bounds for the $s_i$ on the regions $X \setminus B_{\varepsilon_i}$. By Arzela-Ascoli, for any $\beta < \alpha$ we can extract a subsequence that converges locally in $C^{4,\beta}_{\delta}$ to a section $s$ of $\End E$ over $X_p$. By the above, this section satisfies that $\tr_q (s) = 0$ and that $\tilde{\Delta}_{\omega} (s) =0$. It then follows from Corollary \ref{relevantweights} that $s$ is $0$.

Next note that equations (\ref{firstschauder}) and (\ref{Lbound}) imply 
\begin{align*} 1 \leq \frac{c}{i} + c \| s_i \|_{C^{0}_{\delta}}.
\end{align*}
Thus 
\begin{align*}\frac{1}{c} - \frac{1}{i} \leq \| s_i \|_{C^{0}_{\delta}},
\end{align*}
which in turn implies that the sequence $s_i$ has $C^{0}_{\delta}$-norm bounded below away from zero. Moreover, as $\| s_i \|_{C^{0}_{\delta}}  \leq \| s_i \|_{C^{2,\alpha}_{\delta}} = 1$, the sequence is also bounded from above in this norm.

If we then rescale the $s_i$ so they satisfy that $\| s_i \|_{C^0_{\delta}} = 1$, these properties then imply that 
\begin{align}\label{c2estimate} \| s_i \|_{C^{2,\alpha}_{\delta}}  < C
\end{align}
and that both $\| \Delta_{\varepsilon_i} (s_i) \|_{C^{0,\alpha}_{\delta - 2}} \rightarrow 0 $ and $s_i \rightarrow 0$ in $C^{2,\alpha}_{\textnormal{loc}} (X_p , \End E)$.

Next, we will make use of the functions $$\varrho = \varrho_{\varepsilon} : \Bl_p X \rightarrow \mathbb{R}$$ given by
\begin{align*} \varrho (x) =
  \begin{cases}
    1     & \quad \text{if } x \in \Bl_p X \setminus \pi^{-1} (B_{1}) \\
    |z(x)|     & \quad \text{if } x \in  \pi^{-1} (B_{1} \setminus B_{\varepsilon}) \\
    \varepsilon      & \quad \text{if } x \in \pi^{-1} (B_{\varepsilon} ) \\
  \end{cases}
\end{align*}
Here we are using the coordinate $z$ on the annular region $\pi^{-1} ( B_1 \setminus B_{\varepsilon})$ as before. For notational simplicity, denote $\varrho_i = \varrho_{\varepsilon_i}$.

Note that $\| s_i \|_{C^0_{\delta}}  = \| \varrho_i^{- \delta} s_i \|_{C^0(\Bl_p X, \End E)}$, so for each $i$ there is a point $q_i \in \Bl_p X$ such that
\begin{align}\label{c0bound} | \varrho_i^{-\delta} (q_i) s_i (q_i) | = 1.
\end{align} 
Since $|s_i(q_i) | \rightarrow 0$, this then implies that $\varrho_i^{-\delta} (q_i) \rightarrow \infty$. Thus, as $\delta < 0$, we also have $\varrho_i (q_i) \rightarrow 0$. We separate the two cases when $\varepsilon_i^{-1} \varrho_i (q_i)$ is bounded and not. As we will see, this corresponds to whether or not the points $q_i$, thought of as points in $\Bl_0 \mathbb{C}^n$ using the $\varepsilon$-dependent charts, remain bounded or not. 

We first consider the case that $\varepsilon_i^{-1} \varrho_i (q_i)$ is bounded. Recall that we have the coordinates $\zeta = \varepsilon^{-1} z$ on the neighbourhood of the exceptional divisor in the blowup. That $\varepsilon_i^{-1} \varrho_i (q_i) \leq r$ then says that $|\zeta(q_i)| \leq r$. Thus we can choose a subsequence such that the points $\zeta(q_i)$ converge to a point $q \in \Bl_0 \mathbb{C}^n$. 

Through our coordinate charts $\zeta$, the $s_i$ can be thought of as sections of $$\mathbb{C}^{m^2} \rightarrow \pi^{-1} \left(B_{\varepsilon^{-1}_i} (0) \right) \subseteq \Bl_0 \mathbb{C}^n.$$ Moreover, since the norm $\| \cdot \|_{C^{2,\alpha}_{\delta}}$ on $\Bl_p X$ for the parameter $\varepsilon$ is equivalent to $\| \gamma_1 \cdot \|_{C^{2,\alpha}_{\delta} (X_p, \End E)} + \varepsilon^{-\delta} \| \gamma_2 \cdot \|_{C^{2,\alpha}_{\delta} (\Bl_0 \mathbb{C}^n, \mathbb{C}^{m^2}) }$, estimate (\ref{c2estimate}) gives a a uniform $C^{2,\alpha}_{\delta}$ bound on $\varepsilon_i^{-\delta} s_i$ increasing subsets of $\Bl_0 \mathbb{C}^n$ whose union covers the whole of $\Bl_0 \mathbb{C}^n$. 

We can then choose a subsequence of $\varepsilon_i^{-\delta} s_i$, which we still denote by the same index, converging in $C^{2,\alpha}_{\textnormal{loc}}$ to a section $s$ of $\mathbb{C}^{m^2} \rightarrow \Bl_0 \mathbb{C}^n$. The bounds on the $\varepsilon_i^{-\delta} s_i$ then imply that $s \in C^{2,\beta}_{\delta} (\Bl_0 \mathbb{C}^n, \mathbb{C}^{m^2})$ for some $\beta < \alpha$.

The pair $(\varepsilon_i^{-2} \omega_{\varepsilon_i}, h_{\varepsilon_i})$ converges to $(\omega_{BS}, h_{\textnormal{flat}})$ on $\Bl_0 \mathbb{C}^n$. Thus as the $s_i$ solve $\Delta_{\varepsilon_i} (s_i) =0$,  the limit $s$ is a solution of $\Delta(s) = 0$, where $\Delta$ is the Laplace operator associated to $(\omega_{BS}, h_{\textnormal{flat}})$. Since $\varepsilon_i^{-1} \varrho_i (q_i) \leq r$ and $\delta < 0$, we have $ \varrho_i^{\delta} (q_i)  \geq r^{\delta}$. Therefore, by equation (\ref{c0bound}), $|s(q)| \geq r^{\delta}$. But on $C^{2,\beta}_{\delta} (\Bl_0 \mathbb{C}^n, \mathbb{C}^{m^2})$, Corollary \ref{relevantwtsblowupcn} gives that this Laplacian has trivial kernel, and so this is a contradiction.  

To complete the proof we must show that the case when $\varepsilon_i^{-1} \varrho_i (q_i)$ is unbounded also leads to a contradiction. Recall that $\varrho_i (q_i) \rightarrow 0$, so the only possibility then is that infinitely many $q_i$ lie in $\pi^{-1} ( B_1 \setminus B_{\varepsilon_i} )$. Thus we can choose a subsequence, which we still denote by the same index, such that $\varrho_i(q_i) = |z(q_i)|$ for all $i$. 

Now choose $0 < r_i < R_i$ such that $\varepsilon_i^{-1} r_i |z(q_i)| \rightarrow \infty$ and $R_i |z(q_i)| \rightarrow 0$. By our assumptions, it then follows that $r_i \rightarrow 0$ and that $R_i \rightarrow \infty$. We will identify the annular region 
\begin{align*} \{ x \in \Bl_p X : r_i |z(q_i) | \leq |z(x)| \leq R_i |z(q_i) | \}
\end{align*}
with the annulus
\begin{align*} \mathfrak{A}_i =  B_{R_i} (0) \setminus B_{r_i} (0)  \subseteq \mathbb{C}^n \setminus \{ 0 \}
\end{align*} by rescaling. Note that the $q_i$, thought of as points in the latter annulus, then all lie on the unit sphere in $\mathbb{C}^n \setminus \{ 0 \}$. So we can choose a convergent subsequence $q_i \rightarrow q \in S^{2n-1} \subseteq \mathbb{C}^n \setminus \{ 0 \}$.

Next, the metrics $( \frac{1}{|z(q_i)|^2} \omega_{\varepsilon_i}, h_{\varepsilon_i})$ on $\End E \cong \mathbb{C}^{m^2}$ over $\mathfrak{A}_i$ converge to $(\omega_{\textnormal{eucl}}, h_{\textnormal{flat}})$ locally uniformly on $\mathbb{C}^n \setminus \{ 0 \}$ to any order. This follows by our choice of the $r_i$ and $R_i$. The latter ensures that the parts coming from $\omega$ on $X$ approach the Euclidean metric, as $\omega$ is approximately Euclidean near $p$ and the balls $B_{R_i |q_i|}$ become smaller and smaller balls around $p$. The former ensures the same from the terms coming from the asymptotically flat Burns-Simanca metric, as the annular region thought of as a subset of $\Bl_0 \mathbb{C}^n$ lies in the complement of $\pi^{-1} (B_{\varepsilon_i^{-1} r_i |z(q_i)})$, which becomes a larger and larger subset of $\Bl_0 \mathbb{C}^n$ as $i$ increases.

The Laplace operators corresponding to the weight $\varepsilon_i$ therefore converge to the Laplace operator associated to $(\omega_{\textnormal{eucl}}, h_{\textnormal{flat}})$ on $\mathbb{C}^{m^2} \rightarrow \mathbb{C}^n \setminus \{ 0 \}$. 

Also, the sections $|z(q_i)|^{-1} s_i$ on $\mathfrak{A}_i$ have $C^{2,\alpha}_{\delta} (\mathfrak{A}_i, \mathbb{C}^{m^2})$-norm which is uniformly equivalent to the $C^{2,\alpha}_{\delta} (B_{|z(q_i)| R_i)} \setminus B_{|z(q_i)| r_i)}, \End E)$-norm of the $s_i$. Since the $s_i$ satisfy the estimate (\ref{c2estimate}), this therefore implies that the sections $|z(q_i)|^{-1} s_i$ satisfy the same estimate on $\mathfrak{A}_i$ as well. We can then extract a subsequence of $|z(q_i)|^{-1} s_i$ converging to a section $s$ of $\mathbb{C}^{m^2}$ over $\mathbb{C}^n \setminus \{ 0 \}$ locally in $C^{2,\beta}$ for some $\beta < \alpha$. The uniform bound on the $C^{2,\alpha}_{\delta} (\mathfrak{A}_i, \mathbb{C}^{m^2})$-norm of $|z(q_i)|^{-1} s_i$ then implies that $s$ has finite $C^{2,\beta}_{\delta}$-norm.

Thus we get a function $s: \mathbb{C}^n \setminus \{ 0 \} \rightarrow \mathbb{C}^{m^2}$ in $C^{2,\beta}_{\delta} (\mathbb{C}^n \setminus \{ 0 \} , \mathbb{C}^{m^2} )$ which satisfies $\Delta (s) = 0$, where $\Delta$ is the Euclidean Laplacian acting on each component separately. Moreover, since $\varrho_i (q_i) = |z(q_i)|$ for all $i$,  equation (\ref{c0bound}) implies that $|s(q) | = 1$. But for any weight which is not an indicial root, this Laplacian is an isomorphism. Thus $\Delta (s) =0$ implies that $s =0$ and so $s(q) = 0 \neq 1$. Hence no such $s$ can exist. This means that we cannot have that $\varepsilon^{-1}_{i} \varrho_i (q_i)$ is unbounded either, and this completes the proof of the result.
\end{proof}

\section{Proof of the main result}\label{main-section}

Having developed the linear theory, we are now ready to prove the main theorem. The strategy of the proof is analogous to Sz\'ekelyhidi's method in the setting of extremal K\"ahler metrics \cite{szekelyhidi12}, see also \cite[Chapter 8]{szekelyhidi14book}. To ease notation, in this section we will often denote $C^{k,\alpha}_{\delta} (\Bl_p X,  \End E) $ simply by $C^{k,\alpha}_{\delta}$, where as usual we are taking $\delta \in (2-2n,0)$.

We briefly recall the setup of our problem. On $\pi^*E\to \Bl_p X$ we have a family of hermitian metrics $h_{\epsilon}$ and also a family of K\"ahler metrics $\omega_{\epsilon}$. On $(E,h)\to (X,\omega)$ we have a canonical connection $A$ - the Hermitian Yang-Mills connection - which induces a holomorphic structure on $E$. This induces a holomorphic structure on $\pi^*E$, hence we obtain a Chern connection for this holomorphic structure with respect to each $h_{\epsilon}$. We will denote this connection by $A_{\epsilon}$. For each element of the complex gauge group $f\in  \Gamma (\GL (E))$, we obtain a new connection $A_{\epsilon}^f$ as usual by setting $$d_{A_{\epsilon}^f} = f^* \circ \partial_{A_{\epsilon}} \circ (f^*)^{-1} + f^{-1} \circ \bar{\partial}_{A_{\epsilon}} \circ f.$$ As in equation (\ref{linearisation-space}), motivated by the linearisation calculation, we shall set $$ \scH_{\epsilon}= \mathcal{G}^{\mathbb{C}} \cap \Gamma \left( \End_{H} ( E, h_{\varepsilon}) \right),$$ where $\End_{H} (E, h_{\varepsilon})$ denotes the space of Hermitian endomorphisms of $(E,h_{\epsilon})$. With this notation in place, the equation we wish to solve is
\begin{align*} i \Lambda_{\omega_{\varepsilon}} F_{A_{\epsilon}^f} = c_{\varepsilon} \cdot \Id_E,
\end{align*}
where $c_{\epsilon}$ is the only possible topological constant and $f\in  \scH_{\epsilon}$.

We wish to solve the equation using the contraction mapping theorem by rephrasing the equation above as a fixed point problem. For this to have any hope, we need solutions of the equation to be unique. As it stands, this is not the case: if $f \in \mathcal{G}^{\mathbb{C}}$ is a solution, then so is $c f $, for any non-zero constant $c$. We avoid this issue by modifying the equation we wish to solve. As in the previous section, pick a point $q \in \Bl_p X$ sufficiently far away from the exceptional divisor. We shall instead try to solve the equation
$$i \Lambda_{\omega_{\varepsilon}} F_{A_{\varepsilon}^f}  = c_0\cdot \Id_E + \tr_q (f) \cdot \Id_{E},$$ which has no such issue:  if a solution exists, we are forcing $\tr_q(f) = m(c_{\epsilon} - c_0)$. 

 To see how we can view this equation as a fixed point problem, we expand the operator $F_{A_{\epsilon}^f}$ as 
\begin{align*} i \Lambda_{\omega_{\varepsilon}} F_{A_{\epsilon}^f} = i \Lambda_{\omega_{\varepsilon}} F_{A_{\varepsilon}} + L_{\varepsilon} (f) + Q_{\varepsilon} (f)
\end{align*}
for some non-linear operator $Q_{\varepsilon} (f)$. Here, since we are restricting to Hermitian endomorphisms, the linearisation  $L_{\varepsilon} (f) = \Delta_{A_{\epsilon,\End E}}(f)$ is the Laplacian, by Lemma \ref{linearisedoperator}. 
The equation can then be viewed as
\begin{align}\label{eqn1} L_{\varepsilon} (f) - \tr_q (f) \cdot \Id_E =  c_0 \cdot \Id_E - i \Lambda_{\omega_{\varepsilon}} F_{A_{\varepsilon}}  -  Q_{\varepsilon} (f).
\end{align} The left hand side of equation (\ref{eqn1}) is nothing but the operator $\tilde{L}_{\varepsilon}$ as in Section \ref{blowuplinear}. By Theorem \ref{blowuplinearthm}, this operator has an inverse $\tilde{L}_{\varepsilon}^{-1}$ and so the equation we wish to solve is 
\begin{align*} f = \mathcal{N}_{\varepsilon} (f),
\end{align*}
where $$\mathcal{N}_{\varepsilon} : C^{2,\alpha}_{\delta} (\Bl_p X,  \End_{H} ( E, h_{\varepsilon})) \rightarrow C^{2,\alpha}_{\delta } (\Bl_p X,  \End_{H} ( E, h_{\varepsilon}) )$$ is the operator 
\begin{align}\label{mainoperator} \mathcal{N}_{\varepsilon} (f) = \tilde{L}_{\varepsilon}^{-1} \left( c_0 \cdot \Id_E -i \Lambda_{\omega_{\varepsilon}} F_{A_{\varepsilon}}  -  Q_{\varepsilon} (f) \right) .
\end{align}

We now prove a series of lemmata that will contain the key estimates for the proof of Theorem \ref{blthm}. We start by giving an estimate on the annular region for the connection $A_{\varepsilon}$.

\begin{lem}\label{chernconnbound} With $A_{\varepsilon}$ as above, in the annular region $B_{2 r_{\varepsilon}} \setminus B_{r_{\varepsilon}}$ there is a $C>0$, independently of $\varepsilon$, such that 
\begin{align*} \| A_{\varepsilon} -  A \|_{C^{k,\alpha}_{2}} \leq C.  \end{align*}

\end{lem}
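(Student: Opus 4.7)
The plan is a direct, essentially algebraic computation of the difference of Chern connection $1$-forms, followed by a careful scaling analysis using the explicit form of the cutoffs $\gamma_1,\gamma_2$ and the Taylor expansion of $h$ at $p$.

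First I would fix a holomorphic trivialisation of $E$ in a neighbourhood of $p$ together with normal holomorphic coordinates $z$ for $\omega$ at $p$, chosen so that $h(p)=\mathrm{Id}$ and $\partial h(p)=0$. In this frame, the flat metric $h_{\mathrm{flat}}$ used in the gluing is just $\mathrm{Id}$, so writing $u=h-\mathrm{Id}$ we have the Taylor bounds
\begin{align*}
|\nabla^j u|=O(|z|^{2-j}) \qquad (j=0,1,2,\dots).
\end{align*}
In the annular region $B_{2r_\varepsilon}\setminus B_{r_\varepsilon}$, the gluing formula \eqref{bundlemetric} reads $h_\varepsilon=\mathrm{Id}+\gamma_1 u$ and therefore $h_\varepsilon - h=-\gamma_2 u$. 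The Chern connection has the local form $\mathcal{A}=\bar h^{-1}\partial \bar h$, so
\begin{align*}
A_\varepsilon - A &= h_\varepsilon^{-1}\partial h_\varepsilon - h^{-1}\partial h \\
&= h^{-1}\partial(h_\varepsilon-h) + (h_\varepsilon^{-1}-h^{-1})\partial h_\varepsilon \\
&= -(\partial \gamma_2) u - \gamma_2 \partial u + \bigl(h_\varepsilon^{-1}-h^{-1}\bigr)\partial h_\varepsilon
\end{align*}
(up to conjugation dictated by the chosen convention), which is the key identity to estimate.

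Next I would estimate each piece in the annulus. The cutoff $\gamma_1(z)=\gamma(|z|/r_\varepsilon)$ satisfies the uniform bound $|\nabla^j\gamma_i|\leq c\,r_\varepsilon^{-j}$ coming from the chain rule; this is essentially the content of \eqref{cutoffbound}. Combining with $|\nabla^j u|=O(|z|^{2-j})$ and the fact $|z|\sim r_\varepsilon$ throughout the annulus, each of the leading terms $(\partial \gamma_2)u$ and $\gamma_2 \partial u$ is balanced so that powers of $r_\varepsilon^{-1}$ arising from derivatives of the cutoff are exactly compensated by the corresponding powers of $|z|\sim r_\varepsilon$ coming from the Taylor expansion of $u$. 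The remaining term $(h_\varepsilon^{-1}-h^{-1})\partial h_\varepsilon$ is of strictly higher order, since by the Neumann series $h_\varepsilon^{-1}-h^{-1}=h_\varepsilon^{-1}(h-h_\varepsilon)h^{-1}=O(|z|^2)$. Differentiating the identity for $A_\varepsilon - A$ repeatedly and applying the same balancing argument --- together with the multiplicative property of weighted norms from Lemma \ref{productembedding} --- then produces the desired bound in $C^{k,\alpha}_2$.

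Finally, to conclude uniformity in $\varepsilon$, I would translate the pointwise estimates into the weighted norm via the scaled rescaling $s_r^{\delta}(z)=r^{-\delta}s(rz)$ restricted to $r\in (r_\varepsilon,2r_\varepsilon)$: the bounds above are precisely of the form that becomes $O(1)$ after this rescaling, so the supremum over $r$ in the annular contribution to the $C^{k,\alpha}_2$ norm is uniformly bounded. The main technical obstacle is book-keeping: one must verify that every term produced by differentiating the expansion of $A_\varepsilon-A$ up to order $k$ (and its H\"older quotient) respects the balance between $r_\varepsilon^{-j}$ from the cutoffs and $|z|^{2-j'}$ from $u$. This is a finite combinatorial check once the first-order case is understood, and it is for this reason that the normalisation $h(p)=\mathrm{Id},\,\partial h(p)=0$ is essential: it is what provides the extra vanishing needed for the bound to be uniform as $\varepsilon\to 0$.
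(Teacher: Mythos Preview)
Your proposal is correct and follows essentially the same route as the paper: choose a frame with $h(p)=\mathrm{Id}$ and $\partial h(p)=0$ (the paper obtains this by citing Wu's lemma), expand $\mathcal{A}_\varepsilon-\mathcal{A}$ via Leibniz, and observe that the derivative of the cutoff against $u=h-h_{\mathrm{flat}}\in C^{2,\alpha}_2$ is controlled by the multiplicative property of Lemma~\ref{productembedding} together with the uniform bound \eqref{cutoffbound}. The only cosmetic difference is that you unpack \eqref{cutoffbound} into the pointwise statement $|\nabla^j\gamma_i|\le c\,r_\varepsilon^{-j}$ and argue by scaling $|z|\sim r_\varepsilon$, whereas the paper works directly in the weighted formalism; these are two packagings of the same estimate.
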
 
\begin{proof} To prove this estimate, we use the local formula for $A_{\varepsilon}$ as $d + \mathcal{A}_{\varepsilon}$, where $\mathcal{A}_{\varepsilon}$ is the matrix-valued $(1,0)$-form given by
\begin{align}\label{localchernconn} \mathcal{A}_{\varepsilon} = \bar{h}_{\varepsilon}^{-1} \partial \left( \bar{h}_{\varepsilon} \right)\end{align} and we are thinking of $h_{\varepsilon}$ as a matrix. 

In the annulus $B_{2 r_{\varepsilon}} \setminus B_{r_{\varepsilon}}$, $h_{\varepsilon}$ is given by
\begin{align*}h_{\varepsilon} = \gamma_1 h + \gamma_2 h_{\textnormal{flat}}.
\end{align*} By \cite[Lemma 5.2]{XW}, we can choose a local frame for $E$ such that $h = h_{\textnormal{flat}}$ up to an element of $C^{2,\alpha}_{2}$, so we may may replace $h$ with $h_{\textnormal{flat}}$ in our estimate.  

By definition $\gamma_1 + \gamma_2 = 1$, and equation $(\ref{cutoffbound})$ gives that $\| \gamma_i \|_{C^{2,\alpha}_{0}} \leq c$. Thus the multiplicative properties of the weighted norms proved in Lemma \ref{productembedding} imply that $h_{\varepsilon}$ also agrees with $h_{\textnormal{flat}}$ up to an element of $C^{2,\alpha}_{2}$, and hence so does $h_{\varepsilon}^{-1}$. Thus we may take $h_{\varepsilon}^{-1}$ to be the identity in the estimation of equation (\ref{localchernconn}). 

Next, by the Leibniz rule we have
\begin{align*}\partial \bar h_{\varepsilon} &= \gamma_1 \partial \bar  h + (\partial\gamma_1) \bar h + (\partial\gamma_2) \bar h_{\textnormal{flat}} + \gamma_2 \partial  \bar h_{\textnormal{flat}} \\ 
&=  \gamma_1 \partial \bar h + (\partial\gamma_1) \bar h + (\partial\gamma_2) \bar h_{\textnormal{flat}} .
\end{align*}
But $\gamma_2 = 1 - \gamma_1$, so 
\begin{align*}(\partial\gamma_1) \bar h + (\partial\gamma_2) \bar h_{\textnormal{flat}} &= (\partial\gamma_1) \left( \bar h - \bar h_{\textnormal{flat}}  \right).
\end{align*}
Again $\bar h - \bar h_{\textnormal{flat}}  \in C^{2,\alpha}_2$, hence the multiplicative properties of the weighted norms imply that 
\begin{align*}(\partial\gamma_1) \bar h + (\partial\gamma_2) \bar h_{\textnormal{flat}} \in C^{2,\alpha}_2.
\end{align*}
Moreover, the norm of this element can be bounded above independently of $\varepsilon$, because of the multiplicative properties and the bound on the norm of the $\gamma_i$ in equation \eqref{cutoffbound}. This gives the desired result.
\end{proof}

Next we use this to show that the approximate solution is small. 

\begin{lem}\label{approxsol}Suppose $\delta \in (2-2n, 0)$. Then there are constants $C, \varepsilon_0 >0$ such that for all $\varepsilon \in (0,\varepsilon_0)$, 
\begin{align*} \| \mathcal{N} (\Id_E) \|_{C^{2,\alpha}_{\delta} (\Bl_p X, \End E)} \leq C r_{\varepsilon}^{2-\delta}.
\end{align*}
\end{lem}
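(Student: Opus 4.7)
The strategy is to exploit the uniform invertibility of $\tilde{L}_\varepsilon$ established in Theorem \ref{blowuplinearthm} to reduce the lemma to a weighted H\"older estimate on the approximate-solution error $c_0 \Id_E - \Lambda_{\omega_\varepsilon} F_{A_\varepsilon}$. To begin, I would observe that $\mathcal{N}_\varepsilon(\Id_E)$ simplifies: since $A_\varepsilon^{\Id_E} = A_\varepsilon$, substituting $f = \Id_E$ into the defining expansion gives $L_\varepsilon(\Id_E) + Q_\varepsilon(\Id_E) = 0$, and since $\Id_E$ is parallel for the induced connection on $\End E$ we have $L_\varepsilon(\Id_E) = \Delta(\Id_E) = 0$; hence $Q_\varepsilon(\Id_E) = 0$, and by \eqref{mainoperator}
\[\mathcal{N}_\varepsilon(\Id_E) = \tilde{L}_\varepsilon^{-1}\bigl(c_0 \Id_E - \Lambda_{\omega_\varepsilon} F_{A_\varepsilon}\bigr).\]
Theorem \ref{blowuplinearthm} then reduces the problem to proving
\[\|c_0 \Id_E - \Lambda_{\omega_\varepsilon} F_{A_\varepsilon}\|_{C^{0,\alpha}_{\delta-2}(\Bl_p X, \End E)} \leq C\, r_\varepsilon^{2-\delta}.\]

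I would estimate this error separately on the three regions of the gluings \eqref{basemetric} and \eqref{bundlemetric}. On the outer region $X \setminus B_{2r_\varepsilon}$ one has $\omega_\varepsilon = \omega$ and $h_\varepsilon = h$, so $A_\varepsilon = A$ is the HYM connection on $(E,h)$ and $\Lambda_{\omega_\varepsilon} F_{A_\varepsilon} = c \cdot \Id_E$; choosing $c_0 = c$ makes the error vanish there. On the inner region $\pi^{-1}(B_{r_\varepsilon})$ the flatness of $h_\varepsilon$ forces $F_{A_\varepsilon} = 0$, so the error equals the constant $-c_0 \Id_E$ supported on a set of radial scale $r_\varepsilon$. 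Using the equivalent description of the weighted norm via the cutoffs $\gamma_1, \gamma_2$, a direct scaling computation shows that a constant section supported on a region of this scale has $C^{0,\alpha}_{\delta-2}$-norm of order $r_\varepsilon^{2-\delta}$: in the middle part of the definition, the sup of $r^{-(\delta-2)}$ for $r$ approaching $r_\varepsilon$ produces precisely this rate, while the rescaled contribution on $\pi^{-1}(B_\varepsilon)$ is of the strictly smaller order $\varepsilon^{2-\delta}$.

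The main work lies on the annular transition region $B_{2r_\varepsilon} \setminus B_{r_\varepsilon}$. There I would use Lemma \ref{chernconnbound} to control $A_\varepsilon - A$ in $C^{k,\alpha}_2$ uniformly in $\varepsilon$, the potential bound \eqref{potentialbound} to control $\omega_\varepsilon - \omega$, and the multiplicative estimates of Lemma \ref{productembedding} to expand
\[\Lambda_{\omega_\varepsilon} F_{A_\varepsilon} = \Lambda_\omega F_A + \mathcal{R}_\varepsilon = c\, \Id_E + \mathcal{R}_\varepsilon,\]
where the correction $\mathcal{R}_\varepsilon$ (arising from $d_A(A_\varepsilon - A)$, the quadratic term $[A_\varepsilon - A, A_\varepsilon - A]$, and $(\Lambda_{\omega_\varepsilon} - \Lambda_\omega) F_A$) is pointwise of size $O(|z|)$ as $z \to 0$; since $|z| \sim r_\varepsilon$ on this annulus, combined with the weight factor $r^{-(\delta-2)} \sim r_\varepsilon^{2-\delta}$ this contributes $C\,r_\varepsilon^{3-\delta}$, strictly better than required. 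The main obstacle is precisely this annular estimate, which requires simultaneously tracking the gluing errors in the K\"ahler and Hermitian metrics through the curvature formula and through the metric contraction $\Lambda_{\omega_\varepsilon}$, all uniformly in $\varepsilon$ and in the $\varepsilon$-dependent weighted spaces.
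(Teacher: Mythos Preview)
Your proposal is correct and follows essentially the same approach as the paper: reduce via the uniform bound on $\tilde{L}_\varepsilon^{-1}$ and $Q_\varepsilon(\Id_E)=0$ to estimating $c_0\Id_E - \Lambda_{\omega_\varepsilon} F_{A_\varepsilon}$ in $C^{0,\alpha}_{\delta-2}$, then split into the three regions, handling the outer region trivially, the constant $c_0\Id_E$ term by direct weighted scaling, and the annular curvature error via Lemma~\ref{chernconnbound} together with the potential bound~\eqref{potentialbound}. The only cosmetic difference is that the paper separates the two terms $c_0\Id_E$ and $\Lambda_{\omega_\varepsilon} F_{A_\varepsilon}$ first and then localises, whereas you localise first; your annular rate $r_\varepsilon^{3-\delta}$ versus the paper's $r_\varepsilon^{4-\delta}$ is immaterial since both dominate the required $r_\varepsilon^{2-\delta}$.
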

\begin{proof} The quantity we wish to estimate is $$\mathcal{N}_{\epsilon}(\Id_E) =   \tilde{L}_{\varepsilon}^{-1} \left( c_0 \cdot \Id_E -\Lambda_{\omega_{\varepsilon}} F_{A_{\varepsilon}}\right),$$ since $Q_{\epsilon}(\Id_E)$ is zero. Note first that by Theorem \ref{blowuplinearthm}, $\tilde{L}_{\varepsilon}^{-1}$ is bounded independently of $\varepsilon$, thus it suffices to establish the bound
\begin{align*} \| c_0  \cdot \Id_E - i \Lambda_{\omega_{\varepsilon}} F_{A_{\varepsilon}} \|_{C^{0,\alpha}_{\delta - 2} } \leq C r_{\varepsilon}^{2-\delta}
\end{align*}
for some $C > 0$. Also, on $\Bl_p X \setminus \pi^{-1} \left( B_{2 r_{\varepsilon} } \right)$, $\Lambda_{\omega_{\varepsilon}} F_{A_{\varepsilon}} = c_0 \cdot \Id_E$, so we have to prove the above bound in the region $\pi^{-1} (B_{2r_{\varepsilon}})$ near the exceptional divisor. We consider the terms $c_0 \cdot \Id_E$ and $\Lambda_{\omega_{\varepsilon}} F_{A_{\varepsilon}}$ separately. 

For $c_0 \cdot \Id_E$, we need to estimate 
\begin{align*}  \sup_{r\in (\varepsilon, 2r_{\varepsilon})} \| (\Id_E)_r^{\delta - 2} \|_{C^{k,\alpha} (B_2 \setminus B_1)} + \| (\Id_E)^{\delta - 2}_{\varepsilon} \|_{\Bl_0 \mathbb{C}^n}.
\end{align*}
Remark that $\delta \in (2-2n,0)$, so in particular $2-\delta>0$. As $ (\Id_E)_r^{\delta} = r^{2-\delta} \Id_E$, the supremum in the first term above is attained at the largest value of $r$, which is $2r_{\varepsilon}$. This gives that 
\begin{align*}  \sup_{r\in (\varepsilon, 2r_{\varepsilon})} \| (\Id_E)_r^{\delta - 2} \|_{C^{k,\alpha} (B_2 \setminus B_1)} = c_1 r_{\varepsilon}^{2-\delta}
\end{align*}
for some fixed constant $c_1$ independent of $\epsilon$. Similarly, 
\begin{align*}   \| (\Id_E)^{\delta - 2}_{\varepsilon} \|_{\Bl_0 \mathbb{C}^n} = \varepsilon^{2-\delta} c_2
\end{align*}
for some fixed constant $c_2$ independent of $\epsilon$. 

By definition $r_{\varepsilon}^{2-\delta} = \varepsilon^{2- \delta} \varepsilon^{(1-\kappa) (\delta - 2) }$. Together with $\delta - 2 < 0$ and $1-\kappa > 0$, this gives that $\varepsilon^{2-\delta} < r_{\varepsilon}^{2 - \delta}$, implying the required bound on  $\Id_E$.

Finally, we must show that $\Lambda_{\varepsilon} F_{A_{\varepsilon}}$ satisfies the desired bound and it is here that we apply Lemma \ref{chernconnbound}. Noting that $F_{A_{\varepsilon}}$ vanishes inside $\pi^{-1} (B_{r_{\varepsilon}})$ and $r_{\varepsilon} > \varepsilon$, we need to estimate
\begin{align*}\sup_{r\in (r_{\varepsilon}, 2r_{\varepsilon})} \| (\Lambda_{\omega_{\varepsilon}} F_{A_{\varepsilon}})_r^{\delta-2} \|_{C^{0,\alpha} (B_2 \setminus B_1)}  = \| \Lambda_{\omega_{\varepsilon}} F_{A_{\varepsilon}} \|_{C^{0,\alpha}_{\delta-2} (B_{2r_{\varepsilon}} \setminus B_{r_{\varepsilon}} ) }.
\end{align*}
In other words, we must show there is a constant $c>0$ such that
\begin{align}\label{ineqtoshow} \| \Lambda_{\omega_{\varepsilon}} F_{A_{\varepsilon}} \|_{C^{0,\alpha}_{\delta-2} (B_{2r_{\varepsilon}} \setminus B_{r_{\varepsilon}} ) } \leq c r_{\varepsilon}^{2-\delta}.
\end{align}

By Lemma \ref{chernconnbound}, the difference between the curvatures $F_{A_{\varepsilon}}$ and $F_{A}$ on the annular region is $O(|z|^2)$, independently of $\varepsilon$, where as above $A$ is the Chern connection of the Hermitian-Einstein metric on $E \rightarrow X$. Thus the multiplicative properties imply that there is a constant $c_1 > 0$ such that 
\begin{align}\label{ineq1} \| \Lambda_{\omega_{\varepsilon}} F_{A_{\varepsilon}} \|_{C^{0,\alpha}_{\delta-2}  (B_{2r_{\varepsilon}} \setminus B_{r_{\varepsilon}} ) }  \leq c_1 \left( \| \Lambda_{\omega_{\varepsilon}} F_{A} \|_{C^{0,\alpha}_{\delta-2} (B_{2r_{\varepsilon}}  \setminus B_{r_{\varepsilon}}  )} +  r_{\varepsilon}^{2} r_{\varepsilon}^{2- \delta} \right).
\end{align}
The factor $r_{\varepsilon}^{2} r_{\varepsilon}^{2- \delta}$ being, up to a constant multiple, the norm of $|z|^2$ in the annular region $B_{2r_{\varepsilon}} \setminus B_{r_{\varepsilon}}$. Since $r_{\varepsilon} \rightarrow 0$, this factor is bounded above by $r_{\varepsilon}^{2 - \delta}$. So to establish the desired inequality, all that remains is to bound $\| \Lambda_{\omega_{\varepsilon}} F_{A} \|_{C^{0,\alpha}_{\delta-2} (B_{2r_{\varepsilon}}  \setminus B_{r_{\varepsilon}}  )}$.

 On the annular region under consideration, setting $\omega_{\epsilon} = \omega + i\partial \bar{\partial} \phi_{\epsilon}$, equation (\ref{potentialbound}) implies the bound $\| \phi_{\epsilon} \|_{C^{4,\alpha}_{\delta}} \leq c_2 r_{\varepsilon}^{4-\delta}$ for some constant $c_2$. Therefore by \cite[Lemmata 8.13 and 8.19]{szekelyhidi14book}, the contraction operators $\Lambda_{\omega_{\varepsilon}}$ and $\Lambda_{\omega}$ differ by $c_3 r_{\varepsilon}^{4-\delta}$ in the $\delta-2$ weighted operator norm on $B_{2r_{\varepsilon}} \setminus B_{r_{\varepsilon}}$. Applying this to $F_{A}$ and using the multiplicative properties of the norm, we get that
\begin{align*} \| \Lambda_{\omega_{\varepsilon}} F_{A} - \Lambda_{\omega} F_{A} \|_{C^{0,\alpha}_{\delta-2} (B_{2r_{\varepsilon}} \setminus B_{r_{\varepsilon}} ) } & \leq c_3 r^{4-\delta} \| F_{A} \|_{C^{0,\alpha}_0 (B_{2r_{\varepsilon}} \setminus B_{r_{\varepsilon}} ) } \\
& \leq c_4 r_{\varepsilon}^{4 - \delta}
\end{align*}
for some constant $c_4 > 0$. To establish the desired inequality (\ref{ineqtoshow}), by the above and (\ref{ineq1}), it suffices to show that a similar inequality holds for 
\begin{align*} \| \Lambda_{\omega} F_{A} \|_{C^{0,\alpha}_{\delta -2} (B_{2r_{\varepsilon}} \setminus B_{r_{\varepsilon}}) }.
\end{align*}
But since $i \Lambda_{\omega} F_{A} = c_0 \cdot \Id_E$, this reduces to the first case treated above. This completes the proof.
\end{proof}

Next we show that a small change in the connection gives a small change in the linearised operator. For $f \in \scH_{\epsilon}$, we will let $L_{\varepsilon,f}$ denote the linearised operator associated to the connection $A_{\varepsilon}^{f}$ instead of $A_{\varepsilon}$, so that $L_{\varepsilon, \Id_E} = L_{\varepsilon}$. We will also use the $C^{2,\alpha}_{\delta} \rightarrow C^{0,\alpha}_{\delta -2 }$-norm, which is simply the operator norm between the two indicated weighted spaces. 

\begin{lem}\label{linoppert} There exist  $c,C>0$ such that if $f \in\scH_{\epsilon} $ satisfies  $\| f - \Id_E \|_{C^{2,\alpha}_{0}} \leq c$, then 
\begin{align*} \| L_{\varepsilon, f} - L_{\varepsilon} \|_{C^{2,\alpha}_{\delta} \rightarrow C^{0,\alpha}_{\delta -2 } } \leq C \| f - \Id_E \| _{C^{2,\alpha}_{0}} .
\end{align*}
\end{lem}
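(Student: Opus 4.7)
The plan is to express the difference $L_{\varepsilon, f} - L_\varepsilon$ in terms of the difference of connections $\beta_f := A_\varepsilon^f - A_\varepsilon$, view it as an operator with coefficients built from $\beta_f$ and its first covariant derivative, and bound each term via the multiplicative properties of the weighted H\"older norms established in Lemma \ref{productembedding}.

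\emph{Step 1: Explicit form of $\beta_f$.} Since $f \in \scH_\varepsilon$ is Hermitian with respect to $h_\varepsilon$ we have $f^* = f$, and formula (\ref{connectionaction}) gives
\begin{align*}
d_{A_\varepsilon^f} = d_{A_\varepsilon} + \beta_f, \qquad \beta_f = f \cdot \partial_{(A_\varepsilon)_{\End E}} (f^{-1}) + f^{-1} \cdot \bar\partial_{(A_\varepsilon)_{\End E}} (f),
\end{align*}
which is a section of $\Lambda^1 \otimes \End E$. Writing $g = f - \Id_E$, the smallness assumption $\|g\|_{C^{2,\alpha}_0} \leq c$ ensures, via a Neumann series and Lemma \ref{productembedding}, that $f^{-1}$ is well defined and has $C^{2,\alpha}_0$-norm bounded independently of $\varepsilon$. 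Since $d_{(A_\varepsilon)_{\End E}}$ sends $C^{2,\alpha}_0$ to $C^{1,\alpha}_{-1}$ (the derivative drops the weight by one, and the connection form $\mathcal{A}_\varepsilon$ for $A_\varepsilon$ has $C^{k,\alpha}_0$ norm bounded in $\varepsilon$, as in the proof of Lemma \ref{chernconnbound}), the multiplicative estimates give
\begin{align*}
\|\beta_f\|_{C^{1,\alpha}_{-1}} \leq C'\, \|g\|_{C^{2,\alpha}_0}.
\end{align*}

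\emph{Step 2: Expanding the Laplacian.} The induced connection on $\End E$ transforms by the adjoint action, $d_{(A_\varepsilon^f)_{\End E}}(\psi) = d_{(A_\varepsilon)_{\End E}}(\psi) + [\beta_f, \psi]$. Substituting this into the expression (\ref{laplacian}) for the Laplacian, the difference $(L_{\varepsilon, f} - L_\varepsilon)(a)$ decomposes, after expansion, into a sum of three types of terms:
\begin{enumerate}[(i)]
\item terms linear in $\beta_f$ and in the first covariant derivative of $a$, of the schematic form $i\Lambda_{\omega_\varepsilon}\bigl([\beta_f, d_{(A_\varepsilon)_{\End E}} a]\bigr)$;
\item terms linear in $d_{(A_\varepsilon)_{\End E}}\beta_f$ and in $a$, of the form $i\Lambda_{\omega_\varepsilon}\bigl([d_{(A_\varepsilon)_{\End E}}\beta_f,\, a]\bigr)$;
\item terms quadratic in $\beta_f$ and linear in $a$, of the form $i\Lambda_{\omega_\varepsilon}\bigl([\beta_f,[\beta_f, a]]\bigr)$.
\end{enumerate}

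\emph{Step 3: Weighted bounds.} I apply Lemma \ref{productembedding} with weights that always add up to $\delta - 2$. For type (i), $\beta_f \in C^{1,\alpha}_{-1}$ and $\nabla a \in C^{1,\alpha}_{\delta-1}$, so the pointwise product lies in $C^{1,\alpha}_{\delta-2} \subset C^{0,\alpha}_{\delta-2}$ with norm bounded by $C\|\beta_f\|_{C^{1,\alpha}_{-1}} \|a\|_{C^{2,\alpha}_\delta}$. For type (ii), $\nabla \beta_f \in C^{0,\alpha}_{-2}$ and $a \in C^{2,\alpha}_\delta$, again giving a product in $C^{0,\alpha}_{\delta-2}$. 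For type (iii), the two factors of $\beta_f$ give $C^{1,\alpha}_{-2}$, and a further factor of $a$ produces an element of $C^{1,\alpha}_{\delta-2}$; the bound is controlled by $\|\beta_f\|^2\|a\|$, and one factor of $\|\beta_f\|$ is absorbed by the smallness hypothesis on $c$. The contraction with $\Lambda_{\omega_\varepsilon}$ does not worsen the weight, since on each of the three regions defining $\omega_\varepsilon$ in (\ref{basemetric}) the contraction operator is uniformly comparable (in $\varepsilon$) to the contraction by a model metric of weight zero. Combining these estimates with the fact that the multiplicative constant in Lemma \ref{productembedding} is $\varepsilon$-independent on $Y = \Bl_p X$, and using Step 1, gives
\begin{align*}
\|L_{\varepsilon, f} - L_\varepsilon\|_{C^{2,\alpha}_\delta \to C^{0,\alpha}_{\delta-2}} \leq C\, \|\beta_f\|_{C^{1,\alpha}_{-1}} \leq C''\, \|f - \Id_E\|_{C^{2,\alpha}_0}.
\end{align*}

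The main obstacle is not conceptual but organisational: checking that every term produced by expanding the Laplacian has weights that sum correctly to $\delta - 2$ and that all multiplicative constants can be chosen independently of $\varepsilon$. The latter rests on the $\varepsilon$-independence asserted in Lemma \ref{productembedding} for $\Bl_p X$ together with the uniform comparability of $\omega_\varepsilon$ with the relevant model metrics on each of the three regions of $\Bl_p X$.
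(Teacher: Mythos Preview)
Your proof is correct and follows essentially the same strategy as the paper's: both expand the difference of Laplacians and bound each resulting term via the multiplicative properties of the weighted norms (Lemma~\ref{productembedding}), with the smallness of $f-\Id_E$ controlling $f^{-1}-\Id_E$ by a Neumann series. The only organisational difference is that the paper expands $f\circ\partial_{A_\varepsilon}\circ f^{-1}\circ f^{-1}\circ\bar\partial_{A_\varepsilon}\circ f$ directly on $E$ (obtaining a dozen terms, each containing a factor of $\widetilde f=f-\Id_E$ or $v=f^{-1}-\Id_E$) and then passes to $\End E$ in the last sentence, whereas you package the same expansion more cleanly via the connection one-form $\beta_f$ and its adjoint action on $\End E$, grouping the terms into your types (i)--(iii); the underlying estimates are identical.
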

\begin{proof} For $s \in C^{2,\alpha}_{\delta}(\End E$), by definition of the Laplacian $\Delta_{\epsilon,\End E}$ we need to estimate
\begin{align*} \left( \partial_{A^{f}_{\varepsilon}} \bar{\partial}_{A_{\varepsilon}^{f}} - \bar{\partial}_{A_{\varepsilon}^{f}} \partial_{A^{f}_{\varepsilon}} - \partial_{A_{\varepsilon}} \bar{\partial}_{A_{\varepsilon}} + \bar{\partial}_{A_{\varepsilon}} \partial_{A_{\varepsilon}}\right) (s).
\end{align*}
We only present the argument for 
\begin{align*}  \left( \partial_{A^{f}_{\varepsilon}} \bar{\partial}_{A_{\varepsilon}^{f}} - \partial_{A_{\varepsilon}} \bar{\partial}_{A_{\varepsilon}} \right) (s),
\end{align*}
as the argument for the remaining terms is similar.

Let $\widetilde{f}$ denote $f - \Id_E$ and let $v$ denote $f^{-1} - \Id_E$. Recalling the definition of the action of $f$ on $d_{A_{\varepsilon}}$ in equation (\ref{connectionaction}) and using that $f^* = f$ for $f \in \mathcal{H}_{\varepsilon}$ 
\begin{align*}\partial_{A^{f}_{\varepsilon}} &=  f \circ \partial_{A_{\varepsilon}} \circ f^{-1} , \\
\bar{\partial}_{A^{f}_{\varepsilon}} &=  f^{-1} \circ \bar{\partial}_{A_{\varepsilon}} \circ f .
\end{align*}
Thus, noting that $f^{-1} \circ f^{-1} = ( \Id_E + v) \circ (\Id_E + v) = \Id_E + 2 v + v \circ v$, we see
\begin{align*} \partial_{A^{f}_{\varepsilon}} \bar{\partial}_{A_{\varepsilon}^{f}} =& f \circ \partial_{A_{\varepsilon}} \circ f^{-1} \circ f^{-1} \circ  \bar{\partial}_{A_{\varepsilon}} \circ f \\ =& \widetilde{f} \circ \partial_{A_{\varepsilon}} \circ \bar{\partial}_{A_{\varepsilon}} \circ \widetilde{f}  + 2 \widetilde{f} \circ \partial_{A_{\varepsilon}} \circ  v  \circ  \bar{\partial}_{A_{\varepsilon}} \circ \widetilde{f}  + \widetilde{f} \circ \partial_{A_{\varepsilon}} \circ v \circ v  \circ  \bar{\partial}_{A_{\varepsilon}} \circ \widetilde{f} \\
& +  \widetilde{f} \circ \partial_{A_{\varepsilon}} \circ \bar{\partial}_{A_{\varepsilon}}  +  2 \widetilde{f} \circ \partial_{A_{\varepsilon}} \circ  v \circ  \bar{\partial}_{A_{\varepsilon}} +  \widetilde{f} \circ \partial_{A_{\varepsilon}} \circ  v \circ v  \circ  \bar{\partial}_{A_{\varepsilon}} \\
&+  \partial_{A_{\varepsilon}} \circ \bar{\partial}_{A_{\varepsilon}} \circ \widetilde{f} +  2 \partial_{A_{\varepsilon}} \circ v  \circ  \bar{\partial}_{A_{\varepsilon}} \circ \widetilde{f} +  \partial_{A_{\varepsilon}} \circ  v \circ v \circ  \bar{\partial}_{A_{\varepsilon}} \circ \widetilde{f} \\
&+ \partial_{A_{\varepsilon}} \circ \bar{\partial}_{A_{\varepsilon}} + 2 \partial_{A_{\varepsilon}} \circ v \circ  \bar{\partial}_{A_{\varepsilon}} + \partial_{A_{\varepsilon}} \circ v \circ v \circ  \bar{\partial}_{A_{\varepsilon}} .
\end{align*}
The important point to note is that if we choose $\widetilde{f} = f - \Id_E$ sufficiently small in the $C^{2,\alpha}_{0}$-norm, then there is a $C_1>0$ such that $\| v \|_{C^{2,\alpha}_{0}} \leq C_1 \| \widetilde{f} \|_{C^{2,\alpha}_{0}}$. Using the multiplicative properties of the norm, all of the terms above except $\partial_{A_{\varepsilon}} \circ \bar{\partial}_{A_{\varepsilon}} $ can then be bounded by some constant multiple of $$ \| \widetilde{f} \|_{C^{2,\alpha}_{0}} \cdot \| \partial_{A_{\varepsilon}} \circ \bar{\partial}_{A_{\varepsilon}} \|_{C^{2,\alpha}_{\delta} \rightarrow C^{0,\alpha}_{\delta - 2}}.$$ But since $\partial_{A_{\varepsilon}} \circ \bar{\partial}_{A_{\varepsilon}}$ is a bounded operator, by possibly increasing the constant, we can bound this term by a constant times $ \| \widetilde{f} \|_{C^{2,\alpha}_{0}} $. It follows that the same holds for the induced connection on endomorphisms of $E$, and this completes the proof.
\end{proof}

This in turn allows us to show that the non-linear operator $\mathcal{N}_{\varepsilon}$ is a contraction with a specific constant in a ball around $\Id_E$.
\begin{lem}\label{Ncontraction} There exists a $c>0$ such that if $f, f' \in\scH_{\epsilon} $ satisfy 
\begin{align*} \| f - \Id_E \|_{C^{2,\alpha}_{0}},\| f' - \Id_E \|_{C^{2,\alpha}_{0}} \leq c
\end{align*} 
then 
\begin{align*} \| \mathcal{N}_{\varepsilon} (f ) - \mathcal{N}_{\varepsilon} (f') \|_{C^{2,\alpha}_{\delta}} \leq \frac{1}{2} \| f - f' \|_{C^{2,\alpha}_{\delta}}.
\end{align*}
\end{lem}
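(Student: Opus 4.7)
The plan is to use the uniform bound on $\tilde L_\varepsilon^{-1}$ from Theorem \ref{blowuplinearthm} to reduce the contraction estimate to a bound on $Q_\varepsilon(f)-Q_\varepsilon(f')$, and then to establish this bound via the fundamental theorem of calculus together with Lemma \ref{linoppert}. Since only the $Q_\varepsilon$ term in (\ref{mainoperator}) depends on $f$, we have
\begin{align*}
\mathcal{N}_\varepsilon(f)-\mathcal{N}_\varepsilon(f')=-\tilde L_\varepsilon^{-1}\bigl(Q_\varepsilon(f)-Q_\varepsilon(f')\bigr),
\end{align*}
and Theorem \ref{blowuplinearthm} provides some $K>0$, independent of $\varepsilon$, with $\|\tilde L_\varepsilon^{-1}\|_{C^{0,\alpha}_{\delta-2}\to C^{2,\alpha}_\delta}\leq K$. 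It therefore suffices to prove an estimate of the form $\|Q_\varepsilon(f)-Q_\varepsilon(f')\|_{C^{0,\alpha}_{\delta-2}}\leq C c\,\|f-f'\|_{C^{2,\alpha}_\delta}$, with $C$ independent of $\varepsilon$.

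Let $\Phi(g)=\Lambda_{\omega_\varepsilon}F_{A_\varepsilon^g}$. The definition of $Q_\varepsilon$ gives $Q_\varepsilon(f)-Q_\varepsilon(f')=\Phi(f)-\Phi(f')-L_\varepsilon(f-f')$, and for $c$ small enough the segment $f_t=f'+t(f-f')$ stays in $\scH_\varepsilon$ for $t\in[0,1]$. The fundamental theorem of calculus, combined with the identity $L_\varepsilon=d\Phi|_{\Id_E}$ from Lemma \ref{linearisedoperator}, then yields
\begin{align*}
Q_\varepsilon(f)-Q_\varepsilon(f')=\int_0^1\bigl(d\Phi|_{f_t}-L_\varepsilon\bigr)(f-f')\,dt.
\end{align*}
The required $Q_\varepsilon$-bound therefore reduces to showing
\begin{align*}
\|d\Phi|_g-L_\varepsilon\|_{C^{2,\alpha}_\delta\to C^{0,\alpha}_{\delta-2}}\leq C'\|g-\Id_E\|_{C^{2,\alpha}_0}
\end{align*}
uniformly in $\varepsilon$ and in $g\in\scH_\varepsilon$ close to $\Id_E$.

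This last estimate is essentially the content of Lemma \ref{linoppert}. One checks, using the complex gauge-action identity $A_\varepsilon^{gh}=(A_\varepsilon^g)^h$ together with the chain rule applied to Lemma \ref{linearisedoperator} at base connection $A_\varepsilon^g$, that $d\Phi|_g(\eta)=L_{\varepsilon,g}(g^{-1}\eta)$. Decomposing
\begin{align*}
d\Phi|_g-L_\varepsilon=(L_{\varepsilon,g}-L_\varepsilon)+L_{\varepsilon,g}\circ M_{g^{-1}-\Id_E},
\end{align*}
where $M_\phi$ denotes multiplication by $\phi$, the first summand is bounded by $C_1\|g-\Id_E\|_{C^{2,\alpha}_0}$ directly by Lemma \ref{linoppert}, while the second is bounded by the multiplicative properties of the weighted norms (Lemma \ref{productembedding}) together with the uniform operator-norm bound on $L_{\varepsilon,g}$, which in turn follows from Lemma \ref{linoppert} and the $\varepsilon$-independent boundedness of $L_\varepsilon$. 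The main technical point, and the place requiring the most care, is this bookkeeping: one must control $g^{-1}-\Id_E$ in $C^{2,\alpha}_0$ in terms of $\|g-\Id_E\|_{C^{2,\alpha}_0}$ uniformly in $g$ in a small ball, handle the Hermitian constraint, and ensure constants do not degenerate as $\varepsilon\to 0$. With this in hand, $\|Q_\varepsilon(f)-Q_\varepsilon(f')\|_{C^{0,\alpha}_{\delta-2}}\leq Cc\|f-f'\|_{C^{2,\alpha}_\delta}$, hence $\|\mathcal N_\varepsilon(f)-\mathcal N_\varepsilon(f')\|_{C^{2,\alpha}_\delta}\leq KCc\|f-f'\|_{C^{2,\alpha}_\delta}$; choosing $c$ small enough that $KCc\leq\tfrac12$ completes the argument.
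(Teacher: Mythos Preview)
Your proof is correct and follows essentially the same route as the paper: reduce via the uniform bound on $\tilde L_\varepsilon^{-1}$ from Theorem \ref{blowuplinearthm}, then control $Q_\varepsilon(f)-Q_\varepsilon(f')$ by comparing the linearisation at an intermediate point to $L_\varepsilon$ using Lemma \ref{linoppert}. The only differences are cosmetic: the paper invokes the mean value theorem rather than your integral formula, and it writes $DQ_{\varepsilon,s}=L_{\varepsilon,s}-L_\varepsilon$ directly, tacitly absorbing the factor $s^{-1}$ that you make explicit via $d\Phi|_g(\eta)=L_{\varepsilon,g}(g^{-1}\eta)$ and the extra term $L_{\varepsilon,g}\circ M_{g^{-1}-\Id_E}$; your bookkeeping here is in fact the more careful of the two.
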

\begin{proof} First note that 
\begin{align*} \mathcal{N}_{\varepsilon} (f ) - \mathcal{N}_{\varepsilon} (f') =  \tilde{L}_{\varepsilon}^{-1} \left( Q_{\varepsilon} (f' ) - Q_{\varepsilon} (f) \right).
\end{align*}
By the boundedness of $\tilde{L}_{\varepsilon}^{-1}$, it suffices to show that we can make the norm of 
\begin{align*} Q_{\varepsilon} (f' ) - Q_{\varepsilon} (f)  
\end{align*}
as small a multiple of $\| f - f' \|_{C^{2,\alpha}_{\delta}}$ as we like if we make $f$ and $f'$ sufficiently close to $\Id_E$.

The mean value theorem implies that we can find a $t\in [0,1]$ such that  $s = t f + (1-t) f'$ satisfies
\begin{align*} Q_{\varepsilon} (f' ) - Q_{\varepsilon} (f) &= DQ_{\varepsilon, s} (f') -  DQ_{\varepsilon, s} (f ) \\
&=  DQ_{\varepsilon, s} (f' - f) \\
&= \left( L_{\varepsilon, s} - L_{\varepsilon} \right) (f' - f).
\end{align*}
Here we are using the $s \in \Gamma \left ( \End_H (E , h_{\varepsilon} ) \right)$ so that the linearised operator really is the Laplacian.

By Lemma \ref{linoppert}, if the $C^{2,\alpha}_{0}$-norms of $f- \Id_E$ and $f'- \Id_E$ are sufficently small, we have 
\begin{align*}  \| L_{\varepsilon, s} - L_{\varepsilon} \|_{C^{2,\alpha}_{\delta - 2}} & \leq C \| s - \Id_E \|_{C^{2,\alpha}_{0}} \\ 
& \leq C \left( t \| f - \Id_E \|_{C^{2,\alpha}_{0}} + (1-t) \|f'- \Id_E \|_{C^{2,\alpha}_{0}} \right) \\ 
& \leq C \left( \| f - \Id_E \|_{C^{2,\alpha}_{0}} + \| f' - \Id_E \|_{C^{2,\alpha}_{0}} \right) 
\end{align*}
So by possibly requiring that the $C^{2,\alpha}_{0}$-norms of $f- \Id_E$ and $f'- \Id_E$ are even smaller, we can make this constant as small as we like. This provides the required bound for $\mathcal{N}_{\varepsilon}$, as required.
\end{proof}

We now have all the components required to finish off the proof of Theorem \ref{blthm}. We want to obtain a solution by applying the contraction mapping theorem, so we need to show that there is a choice of $\delta$ such that for all $\varepsilon >0$ sufficiently small, there is a neighbourhood of $\Id_E$ in $C^{2,\alpha}_{\delta} \left(\Bl_p X,  \End_H (E,h)  \right)$ to which the contraction mapping theorem applies. This is the content of the Proposition below.
\begin{prop}\label{finalprop} Let $c$ be the constant in Lemma \ref{Ncontraction}. Let $V_{\varepsilon}$ be defined by
\begin{align*} V_{\varepsilon} = \{ s \in C^{2,\alpha}_{\delta} ( \Bl_p X,  \scH_{\epsilon} ) : \| s- \Id_E \|_{C^{2,\alpha}_{\delta}} \leq c \varepsilon^{- \delta} \} .
\end{align*}
If $\delta \in (2-2n, 0)$ is chosen sufficiently close to $0$, then there exists a $\varepsilon_0 >0$ such that for all $\varepsilon \in (0, \varepsilon_0)$, the map $\mathcal{N}_{\varepsilon}$ restricted to $V_{\varepsilon}$ is a contraction and $\mathcal{N}_{\varepsilon}$ maps $V_{\varepsilon}$ into $V_{\varepsilon}$.
\end{prop}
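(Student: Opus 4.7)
The strategy is to invoke the Banach fixed point theorem for $\mathcal{N}_{\varepsilon}$ on the closed ball $V_\varepsilon$. Two things must be verified: that $\mathcal{N}_\varepsilon|_{V_\varepsilon}$ is a $\tfrac{1}{2}$-contraction in the $C^{2,\alpha}_\delta$-norm, and that $\mathcal{N}_\varepsilon$ preserves $V_\varepsilon$. The unifying device is the weight comparison inequality~(\ref{weightcomparison}): since $\delta < 0$, any $s$ with $\|s-\Id_E\|_{C^{2,\alpha}_\delta} \leq c\varepsilon^{-\delta}$ automatically satisfies
\[
\|s-\Id_E\|_{C^{2,\alpha}_0} \leq \varepsilon^{\delta}\|s-\Id_E\|_{C^{2,\alpha}_\delta} \leq c,
\]
i.e.\ falls in exactly the regime in which Lemma~\ref{Ncontraction} provides a uniform contraction constant.

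For the contraction property, the above observation is precisely what is needed: applied to any pair $f, f' \in V_\varepsilon$, Lemma~\ref{Ncontraction} yields $\|\mathcal{N}_\varepsilon(f) - \mathcal{N}_\varepsilon(f')\|_{C^{2,\alpha}_\delta} \leq \tfrac{1}{2}\|f-f'\|_{C^{2,\alpha}_\delta}$. The same $C^{2,\alpha}_0$-smallness keeps $\mathcal{N}_\varepsilon(f)$ uniformly close to $\Id_E$, hence invertible, so that $\mathcal{N}_\varepsilon(f) \in \scH_\varepsilon$ as required.

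For the self-mapping property, I would decompose using the triangle inequality around the approximate solution $\mathcal{N}_\varepsilon(\Id_E)$:
\[
\|\mathcal{N}_\varepsilon(f) - \Id_E\|_{C^{2,\alpha}_\delta}
\leq \|\mathcal{N}_\varepsilon(f) - \mathcal{N}_\varepsilon(\Id_E)\|_{C^{2,\alpha}_\delta}
+ \|\mathcal{N}_\varepsilon(\Id_E) - \Id_E\|_{C^{2,\alpha}_\delta}.
\]
The contraction bound just obtained dominates the first summand by $\tfrac{c}{2}\varepsilon^{-\delta}$, while Lemma~\ref{approxsol} controls the second by $C r_\varepsilon^{2-\delta}$. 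What remains is an exponent check: since $r_\varepsilon = \varepsilon^{(n-1)/n}$,
\[
\frac{r_\varepsilon^{2-\delta}}{\varepsilon^{-\delta}} = \varepsilon^{\frac{2(n-1)+\delta}{n}} \longrightarrow 0 \quad \text{as } \varepsilon \to 0,
\]
the convergence being guaranteed by $\delta > 2-2n$. Hence there exists $\varepsilon_0 > 0$ so that $Cr_\varepsilon^{2-\delta} \leq \tfrac{c}{2}\varepsilon^{-\delta}$ for all $\varepsilon \in (0, \varepsilon_0)$, closing the sum at $c\varepsilon^{-\delta}$ and placing $\mathcal{N}_\varepsilon(f)$ back in $V_\varepsilon$.

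The principal subtlety is the reconciliation of two scales that both diverge as $\varepsilon \to 0$: the ball radius $\varepsilon^{-\delta}$ (dictated by the weight comparison and Lemma~\ref{Ncontraction}) and the approximate-solution error $r_\varepsilon^{2-\delta}$ (produced by Lemma~\ref{approxsol}). Their ratio is a strictly positive power of $\varepsilon$ only because the gluing construction in~(\ref{basemetric}) and~(\ref{bundlemetric}) uses the distinguished exponent $\kappa = (n-1)/n$; choosing $\delta$ close to $0$ maximises the exponent $(2(n-1)+\delta)/n$ and gives the widest margin of safety in the choice of $\varepsilon_0$, which is why the hypothesis of the Proposition is phrased in that form.
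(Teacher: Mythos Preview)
Your argument is essentially the paper's own proof: invoke the weight comparison~(\ref{weightcomparison}) to land in the regime of Lemma~\ref{Ncontraction}, deduce the $\tfrac12$-contraction on $V_\varepsilon$, then combine the contraction bound with Lemma~\ref{approxsol} via the triangle inequality and reduce to the exponent check $(2-\delta)\tfrac{n-1}{n} > -\delta$, i.e.\ $\delta > 2-2n$. Your computation of the ratio $r_\varepsilon^{2-\delta}/\varepsilon^{-\delta} = \varepsilon^{(2(n-1)+\delta)/n}$ and your closing remark that any $\delta\in(2-2n,0)$ actually suffices both match the paper exactly. One cosmetic discrepancy: in your triangle inequality the second summand is $\|\mathcal{N}_\varepsilon(\Id_E)-\Id_E\|_{C^{2,\alpha}_\delta}$, whereas Lemma~\ref{approxsol} literally bounds $\|\mathcal{N}_\varepsilon(\Id_E)\|_{C^{2,\alpha}_\delta}$; the paper's own display uses the latter quantity directly, so to align with it (and with the lemma as stated) you should drop the extra $-\Id_E$ there.
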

\begin{proof} First note that as $ \delta < 0$, the relationship between the different weighted norms in equation \eqref{weightcomparison} implies that for $s \in V_{\varepsilon}$, 
\begin{align*} \| s - \Id_E \|_{C^{2,\alpha}_{0}} &\leq \varepsilon^{\delta } \| s- \Id_E \|_{C^{2,\alpha}_{\delta}} \\
& \leq c.
\end{align*}
and so Lemma \ref{Ncontraction} applies. In particular, $\mathcal{N}_{\varepsilon}$ is a contraction on $V_{\varepsilon}$. 

By Lemmata \ref{Ncontraction} and \ref{approxsol}, there is a $C>0$ such that
\begin{align*} \| \mathcal{N}_{\varepsilon} ( s )  \|_{C^{2,\alpha}_{\delta}} & \leq \| \mathcal{N}_{\varepsilon} ( s ) - \mathcal{N}_{\varepsilon} (\Id_E)  \|_{C^{2,\alpha}_{\delta}} + \| \mathcal{N}_{\varepsilon} ( \Id_E)  \|_{C^{2,\alpha}_{\delta}} \\
& \leq \frac{1}{2} \| s - \Id_E\|_{C^{2,\alpha}_{\delta}} + C r_{\varepsilon}^{2-\delta}  \\
& \leq \frac{c }{2} \varepsilon^{ - \delta} +  C r_{\varepsilon}^{2-\delta}.
\end{align*}
To show that $\mathcal{N}_{\varepsilon} (s) \in V_{\varepsilon}$ if $\varepsilon$ is sufficiently small, it therefore suffices to establish the bound 
\begin{align*} C r_{\varepsilon}^{2-\delta} \leq \frac{c}{2} \varepsilon^{ - \delta}.
\end{align*}
So it suffices to show that 
\begin{align*}r_{\varepsilon}^{2-\delta} = \varepsilon^{\delta'} \varepsilon^{ - \delta}
\end{align*}
for some $\delta' >0$, i.e. that 
\begin{align*} (2-\delta) \frac{n-1}{n} >  - \delta.
\end{align*}
This is equivalent to
\begin{align*}\delta >   2(1-n),
\end{align*} 
which can clearly be achieved while still having $\delta <0$, since $n>1$. This completes the proof. 
\end{proof}

\bibliography{biblibrary}
\bibliographystyle{amsplain}

\end{document}